\documentclass[reqno]{amsart}
\usepackage{amssymb,amsmath,epsfig,graphics,mathrsfs}

\usepackage{tabularx}
\usepackage{fancyhdr}

\usepackage{hyperref}
\hypersetup{
	colorlinks   = true,
	urlcolor     = blue,
	linkcolor    = blue,
	citecolor   = red ,
	bookmarksopen=true
}

\usepackage{dsfont}

\usepackage{cancel} 
\usepackage{esvect}
\usepackage{tikz}
\usepackage{tikz-3dplot}

\usepackage{pgfplots}
\usepackage{pgfmath}

\pgfplotsset{compat=1.18}

\usepackage{multicol}

\usepackage{amssymb,amsmath,epsfig,graphics,mathrsfs}

\usepackage{tikz}

\usepackage{graphicx}
\usepackage{caption}

\usepackage{bbm}
\usepackage[normalem]{ulem}

\usepackage[dvipsnames,table,xcdraw]{xcolor} 
\usepackage[a4paper,
left=1in,
right=1in,
top=1in,
bottom=1in,
footskip=.25in]{geometry}
\usepackage{hyperref}
\hypersetup{
	colorlinks   = true,
	urlcolor     = blue,
	linkcolor    = blue,
	citecolor   = red ,
	bookmarksopen=true
}

\def \N {\mathbb{N}}

\def \R {\mathbb{R}}

\numberwithin{equation}{section}

\newcommand{\beq}{\begin{equation}}
	\newcommand{\bea}[1]{\begin{array}{#1} }
		\newcommand{\eeq}{ \end{equation}}
	\newcommand{\ea}{ \end{array}}

\newtheorem{theorem}{Theorem}[section]
\newtheorem{lemma}[theorem]{Lemma}
\newtheorem{proposition}[theorem]{Proposition}
\newtheorem{corollary}[theorem]{Corollary}
\newtheorem{remark}[theorem]{Remark}
\newtheorem{definition}[theorem]{Definition}

\makeatletter
\def\@settitle{\begin{center}%
		\baselineskip14\p@\relax
		\bfseries
		\uppercasenonmath\@title
		\@title
		\ifx\@subtitle\@empty\else
		\\[1ex]\uppercasenonmath\@subtitle
		\footnotesize\mdseries\@subtitle
		\fi
	\end{center}%
}
\def\subtitle#1{\gdef\@subtitle{#1}}
\def\@subtitle{}
\makeatother

\usepackage[section]{placeins}

\usepackage{pgfplots}

\title[Classification of the dynamics of the 2D Keller-Segel System]{Classification of the dynamics of radial solutions to the 2D parabolic-elliptic Keller-Segel System}

\author[F.~Buseghin]{Federico Buseghin}
\address{\noindent F.~Buseghin: CY Cergy Paris University. Laboratoire AGM, 2 avenue Adolphe Chauvin 95302 Cergy-Pontoise}
\email{federico.buseghin@cyu.fr}

\author[C.~Collot]{Charles Collot}
\address{\noindent C.~Collot: CY Cergy Paris University. Laboratoire AGM, 2 avenue Adolphe Chauvin 95302 Cergy-Pontoise}
\email{charles.collot@cyu.fr}

\begin{document}

\begin{abstract}
This note gives a complete classification of the asymptotic behavior of radial solutions to the two-dimensional parabolic--elliptic Keller--Segel system on the whole space, for general initial data in the large. We review previous separate results, and unify them within a single classification framework. Depending on the mass, the flow exhibits three distinct asymptotic regimes. For a subcritical mass, solutions converge toward the unique self-similar expander of same mass. At the critical mass $8\pi$, solutions concentrate in infinite time around the stationary state with a universal logarithmic rate. The determination of this behaviour was the last missing step for achieving a complete radial classification, and we prove it in a companion paper (in fact without radial assumption). For a supercritical mass, solutions undergo type II finite-time blow-up with an explicit universal asymptotic rate and the stationary state as profile. This trichotomy holds for all radial initial data with finite second momentum. For non-radial data or infinite second momentum, it is known that other dynamics can be possible; for each of these three universal regimes we review the known results showing how they persist.

\end{abstract}
	\maketitle
\makeatletter
\def\@oddhead{\normalfont\scriptsize
\hfil CLASSIFICATION OF THE DYNAMICS OF THE 2D KELLER--SEGEL SYSTEM
\hfil\thepage}
\def\@evenhead{\normalfont\scriptsize
\thepage\hfil F.~BUSEGHIN AND C.~COLLOT\hfil}
\def\@oddfoot{}
\def\@evenfoot{}
\makeatother
	
	\section{Introduction}
    \subsection{The two dimensional Keller-Segel system on the plane}
We consider the two-dimensional Keller--Segel system
\begin{align}\label{KS}
	\begin{cases}
		\partial_t u
		=
		\Delta u-\nabla\cdot(u\nabla \Phi_u),
		\qquad (x,t)\in\mathbb R^2\times(0,\infty),\\
		-\Delta \Phi_u=u,
		\qquad x\in\mathbb R^2,\\
		u(0,x)=u_0(x), \qquad x\in\mathbb R^2.
	\end{cases}
\end{align}

The Poisson equation can be solved explicitly through the two-dimensional Newtonian potential $\Phi_u(x)=-(2\pi)^{-1}\log|\cdot|*u$, which yields
\[
\nabla \Phi_u(x)
=
-\frac1{2\pi}
\int_{\mathbb R^2}
\frac{x-y}{|x-y|^2}u(y)\,dy.
\]
Accordingly, \eqref{KS} is usually understood not as a system but as a single non-local evolution equation where $\nabla \Phi_u$ is given by the above formula.

The Keller--Segel system is widely regarded as the fundamental mathematical model for chemotactic aggregation phenomena arising in populations of microorganisms \cite{P,KS}. For general surveys on the subject we refer the reader to \cite{H,HP,BBook,SBook}. From a modeling perspective, the system describes the evolution of a population density $u$ subject to diffusion and to an attractive drift $\nabla \Phi_u$ generated by a chemical signal produced by the population itself. After a suitable rescaling, the same model can also be interpreted as describing the evolution of self-gravitating particles \cite{CS}. Since the pioneering work of Jäger and Luckhaus \cite{JL}, the mathematical analysis of \eqref{KS} has attracted considerable attention. We refer the reader to the representative works \cite{CP,NS,B,HV0,SS,BKLN,BDP,S,BCM,BCC,CHVY} concerning the interplay between diffusion and chemotactic aggregation in \eqref{KS} and related systems.

There have been numerous results on the well-posedness of Equation \eqref{KS}, see for instance \cite{BDP,BCM,BCC,W}. This raises the problem that different articles often consider different classes of initial data. Moreover, as Equation \eqref{KS} is a parabolic equation, it is natural to expect instantaneous regularization of solutions. Hence the results of some articles which are stated for a given class of initial data can be also applicable to another class of initial data, once one has shown that by parabolic regularization solutions starting in the latter instantaneously belong to the former. That is why, in order to gather together the results of various articles on large time dynamics, this note recalls and proves some well-posedness results as well as parabolic regularization results, in order to use a common space for the initial data and common spaces for estimates.

One of the most useful space for well-posedness is $L^1(\mathbb R^2)$, for which it is known that Equation \eqref{KS} is well posed \cite{N1,W}. Theorem \ref{th:lwp:L1} gives a precise statement of this fact, along with some parabolic regularization results. In particular, the solution and all its derivatives become instantaneously bounded on $\mathbb R^2$. The total mass is actually conserved along the flow,
\begin{equation}\label{id:mass-conservation}
	\int_{\mathbb R^2}u(t,x)\,dx
	=
	\int_{\mathbb R^2}u_0(x)\,dx
	=:M.
\end{equation}
The equation is invariant under translations and scaling. More precisely, if $u$ solves \eqref{KS}, then so does
\[
u_{\lambda,x_0}(t,x)
=
\frac1{\lambda^2}
u\!\left(
\frac{t}{\lambda^2},
\frac{x-x_0}{\lambda}
\right).
\]
Since $\int u_{\lambda,x_0}dx=\int u dx$, Equation \eqref{KS} is termed as mass critical. The space $L^1$ is thus the critical Lebesgue space. 

Another natural space where Equation \eqref{KS} is well-posed is $L^1(\langle x\rangle^2dx)$ which is the space associated to the norm
$$
\| u_0\|_{L^1(\langle x \rangle^2 dx)}= \int |u_0(x)|(1+|x|^2)dx,
$$
see Theorem \ref{thm:lwp-L1x2} in the present note for a precise statement. Being a subset of $L^1$, the flow satisfies all the properties of Theorem \ref{th:lwp:L1}, and possesses the following additional conserved or monotone quantities. Namely, if $u_0\in L^1(\langle x\rangle^2dx)$, the center of mass is conserved,
\begin{equation}\label{first-momentum}
	M^{-1}\int_{\mathbb R^2}xu(t,x)\,dx
	=M^{-1}
	\int_{\mathbb R^2}xu_0(x)\,dx
	=:x^*[u_0],
\end{equation}
while the second momentum satisfies the virial identity
\begin{equation}\label{second-momentum}
	\mu(t)=\int_{\mathbb R^2}|x-x^*|^2u(t,x)\,dx
	=
	\int_{\mathbb R^2}|x-x^*|^2u_0(x)\,dx
	+
	4t\left(
	1-\frac{M}{8\pi}
	\right)M.
\end{equation}
The free-energy functional
\begin{equation}\label{freeenergy}
	\mathcal F[u]
	=
	\int_{\mathbb R^2}u\log u\,dx
	-
	\frac12
	\int_{\mathbb R^2}u\Phi_u\,dx,
\end{equation}
is well defined for all $t\in (0,T(u))$ and is dissipated by the flow
\begin{equation}\label{freeenergy-2}
\mathcal F[u(t')]=\mathcal F[u(t)]-\int_{t}^{t'}u(x,s)|\nabla \ln u(x,s)-\nabla \Phi_{u(s)}(x)|^2 dx .
\end{equation}
It is intimately associated to equation \eqref{KS} because this equation is the gradient flow of the free energy functional with respect to the Wasserstein distance, see for instance \cite{O}, \cite{BlCalCar} and  \cite{EFJS}.

The second momentum identity \eqref{second-momentum} already reveals the distinguished role of the critical mass threshold $8\pi$. Indeed, the second moment measures the spatial dispersion of the density and therefore quantifies the degree of concentration of the solution. When $M<8\pi$, the second moment grows linearly in time, reflecting the dominance of diffusion and the tendency of the density to spread out. This is intimately linked with the existence of exact self-similar expanders. For all masses $0<M<8\pi$, there exists a unique profile function $\Psi_M$ such that
\begin{equation} \label{id:expander}
u(x,t)= \frac{1}{t+1} \Psi_M \!\left(\frac{x}{\sqrt{t+1}}\right) \quad \mbox{solves } \eqref{KS} \quad \mbox{and} \quad \int_{\mathbb R^2} \Psi_M dx =M,
\end{equation}
and moreover $\Psi_M$ is a radial function, see \cite{BDP,BKLN}.

In contrast, when $M>8\pi$, the second moment decreases linearly, indicating concentration on smaller and smaller spatial scales and, since as long as it is defined this quantity must be non-negative, the solution blows up in finite time. The threshold case $M=8\pi$ is therefore critical, as the second moment remains constant along the flow.

The critical mass of $8\pi$ is also important for the free energy functional \eqref{freeenergy}. A fundamental feature is its scaling behaviour and a direct computation yields
\[
\mathcal F[u_{\lambda,x_0}]
=
\mathcal F[u]
+
\left(
2M-\frac{M^2}{4\pi}
\right)\log\lambda.
\]
Hence the free energy becomes scale invariant precisely at the critical mass
$M=8\pi$. The boundedness from below at critical mass follows from the sharp logarithmic Hardy--Littlewood--Sobolev inequality established in \cite{CL}. A quantitative stability version was later discussed in \cite{CF,C}. From a variational viewpoint, the free energy is coercive for \(M<8\pi\), scale invariant at \(M=8\pi\), and unbounded from below under concentration for \(M>8\pi\). This transition reflects the change from diffusion-dominated dynamics to finite-time blow-up.

The elliptic equation corresponding to stationary solutions to \eqref{KS} also highlights the importance of the critical mass. Indeed, since it can be transformed into the Liouville equation, there exists a unique stationary state up to scaling and translation \cite{CL,BDP,BCC}, and its mass equals precisely the critical mass,
\begin{equation} \label{id:stationary-state}
U(x)=\frac{8}{(1+|x|^2)^2}, \qquad \int_{\mathbb R^2} U(x)dx=8\pi .
\end{equation}

\subsection{Classification of the dynamics for radial solutions with finite second momentum}

The aforementioned critical mass $8\pi$ plays in fact a fundamental role of \emph{threshold} for the radial dynamics: below all solutions have a global self-similar behaviour, above they blow-up with a universal anomalous self-similar behavior, and at the critical mass they concentrate in infinite time with another anomalous self-similarity. The main purpose of the present note is to gather the results of \cite{ BDP, BKLN,N1,BM, EM,M,BC} to obtain a \emph{complete classification of the asymptotic dynamics}. Analogue phase portraits are expected for critical evolution equations, which so far have only been obtained in perturbative regimes near the ground state; see \cite{MMR10} for the critical generalized KdV equation, \cite{MRS10} for the mass critical  Schr\"odinger equation and \cite{CMR} for the  critical heat equation. Here it is obtained for all data in the large, which is a remarkable achievement even in the radial case.

\begin{theorem}[Asymptotic trichotomy]
	\label{thm:classification}
	Let $u$ be a radially symmetric solution to \eqref{KS} such that $u_0\in L^1(\langle x \rangle^2 dx)$, with maximal time of existence $T$. Then the following alternatives hold.
	
	\begin{enumerate}
		
\item \textbf{Subcritical case}. If \(M<8\pi\), the solution is global \(T=\infty\) and, recalling that \(\Psi_M\) is the self-similar expander of mass \(M\) given by \eqref{id:expander}, there holds
\[
u(x,t)=\frac{1}{1+t}\Psi_M\!\left(\frac{x}{\sqrt{1+t}}\right)+\tilde u(x,t).
\]
with the following convergence rate for some universal \(\alpha>0\) ( with constant \(C=C(u_0)\))
\[
\|\tilde u(t)\|_{L^1}
+
(1+t)\|\tilde u(t)\|_{L^\infty}
\le
\frac{C}{(1+t)^\alpha},
\qquad t\ge0.
\]
\item \textbf{Critical case}. If \(M=8\pi\), the solution is global \(T=+\infty\) and can be decomposed for large times as a stationary state given by \eqref{id:stationary-state} concentrating at the origin
$$
u(x,t)=\frac{1}{\lambda^{2}(t)}U\left(\frac{x-x(t)}{\lambda(t)}\right)+\tilde u(x,t),
$$
at scale (where below $\mu$ is the second momentum \eqref{second-momentum})
$$
\lambda(t)= \sqrt{\frac{\mu}{8\pi}} \frac{1+o_{t\to \infty}(1)}{\sqrt{\ln t}},
$$
with the following convergence rate for any $\epsilon>0$ (with constant $C=C(u_0,\epsilon)$),
$$
\| \tilde u(t)\|_{L^1}+\lambda^2(t)\| \tilde u(t)\|_{L^\infty} \leq \frac{C}{(1+t)^{1-\epsilon}}.
$$

		\item \textbf{Supercritical case}. If \(M>8\pi\), then the solution blows up in finite time \(T<\infty\) and can be decomposed as a stationary state given by \eqref{id:stationary-state} concentrating at the origin
		\[
		u(x,t)
		=
		\frac1{\lambda^2(t)}
		U\!\left(\frac{x}{\lambda(t)}\right)
		+ \tilde u\!\left(x,t\right),
		\]
		at scale (where below \(\gamma\) is the Euler--Mascheroni constant)
		\[
		\lambda(t)
		=
		2e^{-1-\gamma/2}
		\sqrt{T-t}\,
		\exp\!\left(
		-\sqrt{\frac{|\log(T-t)|}{2}}
		\right)(1+o_{t\to T}(1)),
		\]
		and where
        $$
        \sup_{[0,T)} \| \tilde u(t)\|_{L^1(|x|<R)}\to 0 \quad \mbox{as }R\to 0 \qquad \mbox{and} \qquad  \lambda^2(t)\| \tilde u(t)\|_{L^\infty} \to 0 \quad \mbox{as }t\to T.
        $$
	\end{enumerate}
\end{theorem}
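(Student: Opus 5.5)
This is a classification theorem assembled from existing results, so the proof reduces to bringing every solution into the setting where each cited result applies, and then quoting it. We first invoke the well-posedness and regularization results in $L^1$ and $L^1(\langle x\rangle^2dx)$ (Theorems \ref{th:lwp:L1} and \ref{thm:lwp-L1x2}). They give that for any $t_0\in(0,T)$, $u(t_0)$ is smooth, bounded with all derivatives, radial, has finite second momentum, and has finite entropy and free energy \eqref{freeenergy}. By time translation we may therefore assume the initial datum lies in whatever smaller class the cited papers require, for instance $u_0\log u_0\in L^1$, $u_0\in L^\infty$, or smooth decaying data. For radial data the center of mass \eqref{first-momentum} is $x^*=0$, which explains why the profiles are centered at the origin in cases (1) and (3). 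Constants then depend on $u(t_0)$, hence on $u_0$.

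\textbf{Subcritical case.} Global existence for $M<8\pi$ follows from the free energy dissipation \eqref{freeenergy-2} combined with the logarithmic HLS inequality \cite{CL}, which bounds the entropy uniformly; this is \cite{BDP}. For the convergence we pass to self-similar variables $v(y,s)=(1+t)u(\sqrt{1+t}\,y,t)$ with $s=\ln(1+t)$. In these variables the expander $\Psi_M$ is a stationary solution, and the finite second momentum ensures $v$ has uniformly bounded moments. Then:
\begin{itemize}
\item the rescaled free energy is a strict Lyapunov functional with unique minimizer $\Psi_M$;
\item \cite{BKLN,BDP} give convergence $v\to\Psi_M$ in $L^1$;
\item the exponential rate in $s$, i.e.\ $(1+t)^{-\alpha}$, comes from the spectral gap and entropy-dissipation argument of \cite{CD}-type results for the linearized operator;
\item the $L^\infty$ bound on $v-\Psi_M$, which gives the factor $(1+t)\|\tilde u\|_{L^\infty}$, follows from parabolic smoothing in the self-similar variables, interpolating the $L^1$ decay against uniform higher regularity.
\end{itemize}

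\textbf{Critical case.} Global existence at $M=8\pi$ with finite second momentum is \cite{BCM}. The second momentum \eqref{second-momentum} is conserved, so no mass escapes to infinity, and the free energy is bounded below. By \cite{BCM} the solution converges to a Dirac mass $8\pi\delta_0$ while staying close, modulo scaling, to $U$, since that is the only way to remain a bounded-below free energy minimizing sequence. The precise rate $\lambda(t)\sim\sqrt{\mu/(8\pi)}/\sqrt{\ln t}$ and the $(1+t)^{-1+\epsilon}$ remainder bound are exactly the content of the companion paper \cite{BC}, stated for general (not necessarily radial) data, so we only need to check its hypotheses after the regularization step. The key relation is that the far-field tail of $\tilde u$ carries the conserved second moment $\mu$, while $U$ has infinite second moment; matching the two at the parabolic scale $\sqrt t$ gives $\lambda^2\ln t\approx \mu/(8\pi)$.

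\textbf{Supercritical case.} Finite-time blow-up follows from \eqref{second-momentum}: $\mu(t)$ would become negative at a finite time, so $T<\infty$. For the blow-up profile and rate we use radial classification results, which are the main obstacle:
\begin{itemize}
\item in the radial setting, the partial mass $m(r,t)=\int_{|x|<r}u$ satisfies a scalar parabolic equation that obeys a comparison principle;
\item \cite{M,EM,BM} use this structure to show that every radial blowup is of type II, with $8\pi$ of mass concentrating at the origin, and with no other blowup mechanism;
\item \cite{CGMN}-type stability and the analysis in \cite{M} then pin down the universal law $\lambda(t)=2e^{-1-\gamma/2}\sqrt{T-t}\,e^{-\sqrt{|\log(T-t)|/2}}(1+o(1))$ for all radial blowups.
\end{itemize}
The remaining conclusions translate directly into the stated remainder bounds:
\begin{itemize}
\item the vanishing of $\sup_t\|\tilde u\|_{L^1(|x|<R)}$ as $R\to0$ says that, apart from the concentrating soliton, no further mass accumulates near the origin;
\item $\lambda^2\|\tilde u\|_{L^\infty}\to0$ expresses convergence to $U$ in the renormalized variables.
\end{itemize}

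The delicate point is checking that the radial results of \cite{M,EM} apply to every radial datum with finite second moment, i.e.\ that type I or other blow-ups are excluded for all such data. We would first try to obtain this through the partial-mass comparison argument.
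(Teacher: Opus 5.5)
Your reduction step (restarting at $t_0>0$ via Theorems \ref{th:lwp:L1} and \ref{thm:lwp-L1x2}) and your critical case (a direct appeal to \cite{BC}) agree with the paper. Getting $T<\infty$ from \eqref{second-momentum} instead of \cite{W} is also fine under the finite second momentum hypothesis.

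\textbf{Supercritical case.} This is where the real gap is, on two counts.

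First, the point you flag as delicate is not delicate. \cite{M} already treats \emph{every} nonnegative radial solution with data in $L^1\cap L^\infty$ and $M>8\pi$: type II blow-up, profile $U$, and the exact law for $\lambda$. Your own regularization step puts $u(t_0)$ in that class. So no new partial-mass comparison argument is needed, and \cite{EM,BM} have nothing to do with radial blow-up.

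Second, and more seriously, the remainder bounds do not ``translate directly''. What \cite{M} provides is:
\begin{itemize}
\item the locally uniform convergence $\lambda^2(t)u(\lambda(t)y,t)\to U(y)$;
\item the weak limit $u(t)\rightharpoonup 8\pi\delta_0+F$ with $F\in L^1$;
\item the quantization $\int_{|x|<\ell\sqrt{T-t}}u\to 8\pi$ for every $\ell>0$.
\end{itemize}
Locally uniform convergence says nothing about $\sup_{|x|\ge A\lambda(t)}\lambda^2 u$. Yet that quantity is exactly what the global statement $\lambda^2\|\tilde u\|_{L^\infty}\to0$ requires, and the paper needs Lemma \ref{lem:outer-linfty} for it.
\begin{itemize}
\item In the parabolic zone, $W=M(\xi,t)/\xi^2$ is eventually nonincreasing for $\xi\le R_0\sqrt{T-t}$ \cite[Lemma 2.7]{M}. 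Hence $u=\xi W_\xi+2W\le 2W\lesssim \xi^{-2}$, so $\lambda^2u\lesssim A^{-2}$ for $A\lambda\le|x|\le R_0\sqrt{T-t}$.
\item In the exterior, one uses $(T-t)u\lesssim1$ for $|x|\ge2\sqrt{T-t}$ together with $\lambda^2/(T-t)\to0$.
\end{itemize}
Similarly, $\sup_{[0,T)}\|\tilde u\|_{L^1(|x|<R)}\to0$ needs three separate ingredients:
\begin{itemize}
\item the quantization, to exclude mass in $A\lambda\le|x|\le \ell\sqrt{T-t}$ beyond the tail of $U$;
\item the integrability of $F$, to control $\ell\sqrt{T-t}\le|x|\le R$;
\item an $L^1$-compactness argument on $[0,T-\delta]$.
\end{itemize}

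\textbf{Subcritical case.} Your outline parallels the paper: self-similar variables, exponential decay from linear stability, then $L^\infty$ by interpolating against uniform higher regularity. However, the functional setting, which you leave unspecified, is exactly where care is needed. The classical spectral gap for the linearized operator holds in $L^2$ weighted by $\Psi_M^{-1}$. This is a Gaussian weight, since $\Psi_M$ decays like a Gaussian. Data in $L^1(\langle x\rangle^2dx)$ are not in such a space, and parabolic regularization produces smoothness, not decay.

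The paper instead relies on the spectral stability result of \cite{EM}, which enlarges the setting to spaces compatible with finite moments. It proceeds as follows:
\begin{itemize}
\item \cite{EM} yields $\|g(s)-\Psi_M\|_{L^{4/3}}\le Ce^{-\eta s}$.
\item The paper passes to $L^1$ by H\"older on $|y|\le R$, and by the uniform-in-$s$ second moment of $g$ (a consequence of \eqref{second-momentum}) on $|y|>R$, then optimizes in $R$.
\item The $L^\infty$ rate follows from the uniform $W^{2,\infty}$ bound of \cite[Theorem 1.4]{EM} and $\|h\|_{L^\infty}\lesssim\|h\|_{L^1}^{1/3}\|\nabla h\|_{L^\infty}^{2/3}$.
\end{itemize}
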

\begin{remark}[Partial Mass Formulation]\label{Partial Mass Formulation}
A key feature of the radial Keller-Segel equation is that in this setting a change of unknown, called the partial mass, transforms this nonlocal equation into a local one. Namely, for a radial solution \(u=u(r,t)\), one introduces
\[
M(\xi,t):=\frac{1}{2\pi}\int_{|x|<\xi}u(x,t)\,dx,
\qquad
W(\xi,t):=\frac{M(\xi,t)}{\xi^2}.
\]
Then \(W\) satisfies the local scalar parabolic equation
\[
W_t
=
W_{\xi\xi}
+
\frac{3}{\xi}W_\xi
+
W\bigl(\xi W_\xi+2W\bigr).
\]
Thus, in the radial case, the original nonlocal Keller--Segel system is reduced
to a local one-dimensional parabolic equation for the partial mass density. This
reduction is crucial for the above classification in the supercritical regime due to \cite{M}, but such a partial mass reduction is intrinsically radial and has no direct analogue
in the non-radial setting.

Equivalently, after the backward self-similar change of variables
\[
r=\frac{\xi}{\sqrt{T-t}},
\qquad
\tau=-\log(T-t),
\qquad
w(r,\tau)=(T-t)W(\xi,t),
\]
one obtains
\[
w_\tau
=
w_{rr}
+
\left(\frac{3}{r}-\frac r2\right)w_r
-
w
+
rw_rw
+
2w^2.
\]

\end{remark}

\subsection{On the universality of each of the three asymptotic behaviours beyond radial and localized data}

Theorem \ref{thm:classification} states that there are three universal behaviors for subcritical, critical and supercritical masses respectively, in the class of radial solutions with finite second momentum. These assumptions are sharp, in the sense that by weakening them the trichotomy does not hold true and new dynamics appear. We shall now comment precisely on the known classes of initial data for which each behaviour is universal.

\subsubsection{Self-similar expansion for subcritical masses}

For subcritical masses $M<8\pi$, the convergence to a self-similar expander of same mass was first proved in \cite{BDP} under the assumption of finite second momentum and $\int u \ln udx<\infty$ integrability. Then the uniqueness and radial symmetry of the self-similar expander was proved in \cite{BKLN}. Eventually, this behaviour was proved to hold for all initial data solely in $L^1$ in \cite{N1}. The robustness of this self-similar behavior of the first kind can be interpreted as the fact that self-similar expanders are the only coherent structures that can emanate from a subcritical mass, in particular no stationary state can appear. 
Considerable attention has been devoted to the well-posedness theory of the Keller--Segel system for initial data more singular than \(L^1(\mathbb R^2)\), including measure-valued data; see for instance \cite{BM}, \cite{LR}, and \cite{EFJS}. As a consequence of these developments, the asymptotic result stated above extends naturally to measure-valued initial data and yields the following theorem. 

\begin{theorem}\label{th:subcritical-measures}
Let \(u_0\in \mathcal M_+(\mathbb R^2)\) be a finite nonnegative Radon measure and assume that its total mass
\[
    M:=u_0(\mathbb R^2)<8\pi .
\]
Let \(\Psi_M\) denote the unique self-similar expander of mass \(M\) given by \eqref{id:expander}. Then the solution is global, \(T=+\infty\), and can be decomposed for large times as
\[
    u(x,t)=\frac{1}{t+1}\Psi_M\!\left(\frac{x}{\sqrt{t+1}}\right)+\widetilde u(x,t),
\]
where, as \(t\to+\infty\),
\[
    \|\widetilde u(t)\|_{L^1(\mathbb R^2)}
    +t\|\widetilde u(t)\|_{L^\infty(\mathbb R^2)}
    \to 0 .
\]
\end{theorem}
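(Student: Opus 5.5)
The plan is to reduce the measure-data statement to the $L^1$ result of \cite{N1}, using parabolic regularization. Concretely, I would take as given the well-posedness theory for finite nonnegative Radon measures with total mass $M<8\pi$ from \cite{BM} (see also \cite{LR,EFJS}). Since every atom of $u_0$ then has mass $<8\pi$, this theory gives a unique solution that is global or at least defined on some $(0,t_0)$. It satisfies $u(t)\in L^1\cap L^\infty$ for $t>0$, conserves mass, and attains the initial datum $u_0$ in the weak-$*$ sense.

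\textbf{Step 1: regularization.} Fix any $t_0>0$ in the existence interval. Then $u(t_0)\in L^1(\mathbb R^2)$ is nonnegative and has mass $M$, by conservation of mass. I would restart the flow from $u(t_0)$ using the $L^1$ theory of Theorem \ref{th:lwp:L1}. Uniqueness in the measure class (which contains $L^1$) shows that this restarted flow coincides with $u$ on $[t_0,\infty)$.

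\textbf{Step 2: apply the $L^1$ result.} The theorem of \cite{N1} states that for any nonnegative $L^1$ datum with $M<8\pi$ the solution is global. Moreover, the self-similar rescaling $v(y,s)=(1+s)u(\sqrt{1+s}\,y,s)$ converges to $\Psi_M$ in $L^1$, and indeed in $L^1\cap L^\infty$ after regularization. Applied to $v(t)=u(t_0+t)$, this gives global existence $T=+\infty$ and
\[
\|v(t)-(1+t)^{-1}\Psi_M(\cdot/\sqrt{1+t})\|_{L^1}+(1+t)\|v(t)-(1+t)^{-1}\Psi_M(\cdot/\sqrt{1+t})\|_{L^\infty}\to0.
\]

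\textbf{Step 3: time shift.} It remains to replace the profile centred at time $t_0+1$ by the one centred at time $1$, i.e.\ to compare $\frac{1}{1+t-t_0}\Psi_M(x/\sqrt{1+t-t_0})$ with $\frac{1}{1+t}\Psi_M(x/\sqrt{1+t})$. After the scaling $x=\sqrt{1+t}\,y$, the difference equals $(1+t)^{-1}[a^{-2}\Psi_M(y/a)-\Psi_M(y)]$ with $a=\sqrt{(1+t-t_0)/(1+t)}\to1$. The bracket tends to zero in $L^1$ and in $L^\infty$, because $\Psi_M$ is integrable, bounded and uniformly continuous (smooth with Gaussian-type decay, being a profile solving an elliptic equation). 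This yields the claimed decomposition.

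\textbf{Main obstacle.} The delicate point is the input from the measure theory. One must check that the cited measure-valued framework gives uniqueness and instantaneous regularization into $L^1\cap L^\infty$ for all finite measures of mass $<8\pi$, in particular when atoms are present. If uniqueness of the measure solution is not available in full generality, I would instead state the result for the solution constructed in \cite{BM}. The argument above only needs $u(t_0)\in L^1$ and uniqueness of the $L^1$ flow from time $t_0$, which Theorem \ref{th:lwp:L1} provides. Finally, no convergence rate is claimed, consistent with the fact that \cite{N1} gives none without moment assumptions.
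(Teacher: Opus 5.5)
Your proposal is correct and follows essentially the same route as the paper. Both arguments take local well-posedness and instantaneous $L^1\cap L^\infty$ regularization for measure data with atoms of mass $<8\pi$ from \cite{BM}, restart the flow from $u(t_0)\in L^1$, and invoke the $L^1$ classification of \cite{N1}. Your Step 3 does something the paper leaves implicit: it absorbs the time shift $t_0$ into the profile, using continuity of dilations of $\Psi_M$ in $L^1\cap L^\infty$, and it does so correctly.
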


\begin{remark}[Convergence rates] For initial data in $L^1$ no convergence rate can be given, as mass can be arbitrarily distributed near spatial infinity. Under stronger assumptions on the initial datum, such as finite entropy and sufficiently high moments, sharp decay rates were obtained; see \cite{EM}.

\end{remark}

\subsubsection{Infinite time anomalous self-similarity at critical mass}

For solutions with critical mass $M=8\pi$, global existence for data in \(L^1\) was established in \cite{W}. The work \cite{BCM} previously proved global existence of free energy solutions in $L^1(\langle x\rangle^2dx)\cap \{\int u\ln u dx<\infty\}$, and that they concentrate an $8\pi$ Dirac mass at their center of mass along any time sequence converging to infinity. After the formal computations in \cite{CS}, an example of a radial and stable solution with Gaussian localization, that concentrates a stationary state in infinite time at a logarithmic scale, was given in \cite{GM}. Later, this was extended to the non-radial case with less localization assumptions in \cite{DdPDMW}. We recently proved in \cite{BC} that, without symmetry assumptions and under the even weaker and sharp sole assumption of finite second momentum, this is the universal behaviour of all critical solutions.

\begin{theorem}[\cite{BC}] \label{th:critical}

Let \(u_0\in L^1(\langle x \rangle^2 dx)\) with mass $M:=\|u_0\|_{L^1(\mathbb R^2)}=8\pi$, and center of mass $x^*$ and second momentum $\mu$ given by \eqref{first-momentum} and \eqref{second-momentum} respectively. Let $U$ denote the stationary state given by \eqref{id:stationary-state}. Then the solution is global,  $T=+\infty$, and the first and the second momenta of the solution are constant over time. Moreover, the solution can be decomposed for large times as a stationary state concentrating at its center of mass
$$
u(x,t)=\frac{1}{\lambda(t)}U\left(\frac{x-x(t)}{\lambda(t)}\right)+\tilde u(x,t)
$$
at a scale
$$
\lambda(t)= \sqrt{\frac{\mu}{8\pi}} \frac{1+o_{t\to \infty}(1)}{\sqrt{\ln t}},
$$
and with the convergence rates
$$
\| \tilde u(t)\|_{L^1(\mathbb R^2)}+\lambda^2(t)\| \tilde u(t)\|_{L^\infty(\mathbb R^2)}\leq C t^{-1+\epsilon} \quad \mbox{and} \quad |x(t)-x^*|\leq C t^{-1/2+\epsilon}
$$
as $t \to \infty$, for any $\epsilon>0$, where the constant $C>0$ depends on $\epsilon$ and $u_0$.

\end{theorem}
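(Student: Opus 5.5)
The plan is to prove the theorem in three steps, following the strategy of \cite{BC}.

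\emph{Step 1: global existence and conservation laws.} For $u_0\in L^1(\langle x\rangle^2dx)$ with $M=8\pi$, Theorem \ref{thm:lwp-L1x2} gives a local solution that is instantaneously smooth and bounded. It conserves mass and center of mass \eqref{first-momentum}, and by \eqref{second-momentum} its second momentum $\mu$ is constant. Global existence follows from \cite{W}: a finite-time blow-up would concentrate at least $8\pi$ mass at a point, which forces $\mu(t)\to 0$ and contradicts $\mu>0$. Since the solution is smooth for $t>0$, we may restart at $t=1$ and assume finite entropy. Then \cite{BCM} applies, and the solution concentrates an $8\pi$ Dirac mass at $x^*$ along every sequence $t_n\to\infty$. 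Combining the free energy dissipation \eqref{freeenergy-2} with the logarithmic HLS inequality \cite{CL} and its quantitative stability \cite{CF,C}, we get that for large $t$ the solution is $L^1$-close to the manifold $\{\lambda^{-2}U((x-z)/\lambda)\}$, with $\lambda(t)\to0$.

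\emph{Step 2: modulation.} We decompose $u=\lambda^{-2}U((x-x(t))/\lambda)+\tilde u$ and fix $\lambda,x$ by orthogonality conditions adapted to the linearized operator $\mathcal L$ around $U$ in self-similar variables $y=(x-x(t))/\lambda$, $ds=dt/\lambda^2$. The key ingredients are as follows.

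(i) Coercivity of $\mathcal L$ in a weighted $L^2$ space (the partial mass or Wasserstein-type norm), modulo the scaling and translation kernel. This uses the nondegeneracy of $U$.

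(ii) A law for $\lambda$. Because $U$ has infinite second moment, the conserved quantity $\mu$ cannot be carried by the bubble. It is instead carried by a tail at scales $|x|\sim\sqrt t$ that connects to the parabolic regime. Matching the inner region $|y|\lesssim 1$, where $\Lambda U\sim |y|^{-4}$ and the modulation equation is $\lambda_s/\lambda\approx$ (flux), with the outer self-similar region $|x|\sim\sqrt t$, gives the leading-order ODE
\[
\frac{d}{dt}\lambda^{-2}\approx\frac{8\pi}{\mu\, t}.
\]
Its solution is $\lambda^2\sim \mu/(8\pi\ln t)$. The logarithm comes from the integral $\int^{\sqrt t/\lambda}|y|\,|y|^{-4}\cdots$ of the $|y|^{-4}$ tail against the $|y|^2$ weight.

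(iii) Translation. Orthogonality to $\nabla U$, together with conservation of the center of mass, gives $|x(t)-x^*|\lesssim t^{-1/2+\epsilon}$.

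\emph{Step 3: bootstrap.} We run a bootstrap in which the energy estimates on $\tilde u$ use (i), and we derive the $L^1$ and $L^\infty$ rates $t^{-1+\epsilon}$. The $\epsilon$ loss absorbs the logarithmic corrections. The $L^\infty$ rate is then upgraded from the weighted $L^2$ control by parabolic regularization on unit self-similar time intervals.

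\emph{Main obstacle.} The hardest part is controlling the solution in the large, meaning before it enters the perturbative regime, using only finite second momentum. The compactness argument of \cite{BCM} gives concentration only along sequences and without rates. Reaching a quantitative entry time, and keeping the far-field mass under control so that it feeds $\mu$ correctly, requires handling the tail $\tilde u$ at $|x|\sim\sqrt t$. I would first try to control this tail with the virial identity localized at scale $\sqrt t$ combined with Gaussian-type heat kernel bounds for the outer linear flow, which is close to the heat equation with a point-mass drift $-4x/|x|^2$.
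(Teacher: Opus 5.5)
The paper gives no proof of Theorem \ref{th:critical}. It is quoted from the companion paper \cite{BC} and used as a black box in the critical case of Theorem \ref{thm:classification}. What the paper does contain are the inputs of your Step 1: global existence via \cite{W}, the conservation laws and \eqref{freeenergy-2} via Theorem \ref{thm:lwp-L1x2}, and concentration along sequences via \cite{BCM}. It also states the heuristic, after \cite{CS}, that conservation of $\mu$ selects the scale. (Your normalization $\lambda^{-2}U((x-x(t))/\lambda)$ is the right one; the $\lambda^{-1}$ in the statement is a typo, cf. part (2) of Theorem \ref{thm:classification}.)

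Judged as a proof, your proposal has a genuine gap exactly where you place the main obstacle, yet Step 1 claims to have already crossed it. Monotonicity of $\mathcal F$ and log-HLS give only $\mathcal F[u(t)]\downarrow\mathcal F_\infty\ge\mathcal F[U]$. A stability estimate for log-HLS yields closeness to $\{\lambda^{-2}U((x-z)/\lambda)\}$ only if $\mathcal F_\infty=\mathcal F[U]$, and nothing you invoke provides this. The concentration $u(t)\rightharpoonup 8\pi\delta_{x^*}$ of \cite{BCM} says nothing about $\mathcal F_\infty$. At mass $8\pi$ the free energy is invariant under $u\mapsto u_{\lambda,x_0}$, so any density concentrates to $8\pi\delta_{x_0}$ at constant free energy. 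Moreover, in this paper the stability estimates of \cite{CF,C} are applied only in the thick-tail, finite relative entropy regime, where solutions converge to a fixed stationary state; finite second momentum excludes that regime. Even qualitative $L^1$-closeness to the manifold would be far from what a bootstrap needs. You would need smallness in the weighted norm of your coercivity (i), plus an initial scale and a far field consistent with $\mu$.

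The same far-field issue sits behind the constant in $\lambda$. Since $\int_{|x|<R}|x|^2\lambda^{-2}U(x/\lambda)\,dx=8\pi\lambda^2\bigl(2\ln(R/\lambda)-1+o(1)\bigr)$, the law $\lambda^2\ln t\to\mu/(8\pi)$ says that, up to $o(\mu)$, the whole second moment sits in the bubble's own $|y|^{-4}$ tail, spread evenly over the dyadic scales between $\lambda$ and $\sqrt t$. The zone $|x|\sim\sqrt t$ only supplies the cut-off and carries $O(\mu/\ln t)$. This contradicts the first sentence of your (ii), though it matches its last sentence.

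Proving the law means showing that $\mu$ agrees, up to $o(\mu)$, with the second moment of the bubble truncated at $|x-x^*|=\sqrt t$. That requires controlling second moments of the remainder out to the parabolic zone and beyond. Neither the $L^1$ rate nor coercivity at scale $\lambda$ gives this: a mass $t^{-1}$ at $|x|\sim\sqrt t$ carries a second moment of order one. Your flux-matching ODE has the right solution but is not derived, and the $t^{-1+\epsilon}$ rates of Step 3 are asserted rather than obtained.

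By contrast, (iii) is not an independent difficulty. Since $\int\tilde u=0$ and the center of mass is conserved, $8\pi(x^*-x(t))=\int(x-x^*)\tilde u\,dx$. Splitting at $|x-x^*|=\sqrt t$ gives $|x(t)-x^*|\lesssim\sqrt t\,\|\tilde u\|_{L^1}+(\mu+\lambda^2)t^{-1/2}$, so the translation bound follows directly from the $L^1$ rate.
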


\begin{remark}[Convergence rates]
An algebraic decay rate of order $t^{-1}$ in $L^1$ would be the best possible decay rate in the setting of Theorem \ref{th:critical}. We expect that the approach developed in \cite{BC} could investigate the possibility of this decay rate, but this would likely require a refinement of the ansatz.
\end{remark}

Thus, the critical mass $8\pi$ is a threshold at which the self-similar spreading of the first kind of Theorem \ref{th:subcritical-measures} transitions to infinite time concentration with anomalous self-similarity. With less localization assumptions, other behaviours are known which still display an anomalous self-similarity, but there cannot be anymore universality in the scaling law.

Indeed, the large-time dynamics of the Keller--Segel equation at critical mass strongly depend on the spatial behaviour at infinity imposed on the initial datum. For solutions which are perturbative of the tail of the stationary state \eqref{id:stationary-state} at infinity, the works \cite{BCC,CF,C} (see also \cite{LGNY}) proved convergence toward a uniquely determined stationary state for critical-mass solutions with finite free energy and finite relative entropy, together with the algebraic convergence rate $t^{-1/16}$ obtained through stability estimates for the log-HLS inequality.

Without the finite relative entropy assumption, the asymptotic behavior can be drastically different. The work \cite{NaiS} constructed oscillatory radial solutions, while \cite{LGNY1} showed that bounded radial solutions either converge to a single stationary state or oscillate through an interval of stationary states, and also constructed nonradial solutions with asymptotic dynamics involving a continuum of stationary states.

As formally observed in \cite{CS}, the conservation of the second moment should be regarded as the mechanism selecting the asymptotic scale in the finite second-moment regime.

\subsubsection{Finite time type II blow-up by single or multiple collapses for supercritical mass}

\medskip

For initial data in $L^1$ with mass \(M>8\pi\), finite-time blow-up always occurs \cite{W}. The first construction of a precise blow-up example goes back to \cite{HV0,HV1,HV2,V}. This solution was then shown to be stable in the radial class in \cite{RS}, and then later \cite{CGMN0} obtained its sharp blow-up rate and non-radial stability. The work \cite{M} then proved, using comparison techniques with this precise blow-up example, that all radial solutions with data in $L^1\cap L^\infty$ produce the same blow-up behaviour. In this note we show a standard parabolic regularization result that enables to remove the $L^\infty$ assumption.

\begin{theorem} \label{th:supercritical}
	Let \(u_0\in L^1(\mathbb R^2)\) with mass $M:=\|u_0\|_{L^1(\mathbb R^2)}>8\pi$. Let $U$ denote the stationary state given by \eqref{id:stationary-state}. Then the solution blows up in finite time \(T<+\infty\). Moreover, there exist a scale \(\lambda(t)>0\) and a remainder \(\widetilde u\) such that it can be decomposed near the blow-up time as a stationary state collapsing at the origin, 
		\[
		u(x,t)
		=
		\frac1{\lambda^2(t)} U\!\left(\frac{x}{\lambda(t)}\right)+\tilde u(t,x),
		\]
        with
        \begin{equation} \label{id:scale-supercritical-result}
		\lambda(t)
		=
		2e^{-1-\gamma/2}
		\sqrt{T-t}\,
		\exp\!\left(
		-\sqrt{\frac{|\log(T-t)|}{2}}
		\right)(1+o(1)),
		\qquad t\to T,
		\end{equation}
		where \(\gamma\) denotes the Euler--Mascheroni constant, and where
		\begin{equation} \label{bd:reminder-supercritical-result}
        \sup_{[0,T)} \| \tilde u(t)\|_{L^1(|x|<R)}\to 0 \quad \mbox{as }R\to 0 \qquad \mbox{and} \qquad  \lambda^2(t)\| \tilde u(t)\|_{L^\infty} \to 0 \quad \mbox{as }t\to T.
        \end{equation}
		
\end{theorem}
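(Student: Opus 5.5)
The statement is implicitly radial, since the blow-up point is the origin and the result of \cite{M} is radial. I therefore read it as concerning radially symmetric $u_0\in L^1(\mathbb R^2)$ with $M>8\pi$. The plan is to reduce to the result of \cite{M}, which gives exactly the conclusion \eqref{id:scale-supercritical-result}--\eqref{bd:reminder-supercritical-result} for radial data in $L^1\cap L^\infty$. The reduction uses the parabolic regularization in Theorem \ref{th:lwp:L1}.

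\emph{Step 1 (finite time blow-up and regularization).} Finite-time blow-up $T<\infty$ for $L^1$ data with $M>8\pi$ is known from \cite{W}. Fix some $t_0\in(0,T)$, say $t_0=T/2$. By Theorem \ref{th:lwp:L1}, $u(t_0)\in L^1\cap L^\infty$, together with all its derivatives, and the mass is conserved, so $\|u(t_0)\|_{L^1}=M$. Radial symmetry is preserved along the flow: for any rotation $R$, the function $u(t,Rx)$ is also a solution with the same data, so uniqueness in Theorem \ref{th:lwp:L1} forces $u(t,Rx)=u(t,x)$.

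\emph{Step 2 (apply \cite{M} and shift time).} Let $v(s)=u(t_0+s)$. By uniqueness, $v$ is the radial $L^1\cap L^\infty$ solution with data $u(t_0)$, and its maximal time is $T-t_0$. The result of \cite{M} gives a decomposition of $v$ with scale $\lambda_v(s)$ satisfying \eqref{id:scale-supercritical-result} with $T-t$ replaced by $(T-t_0)-s$. This is the same quantity as $T-t$ under $t=t_0+s$, so $\lambda(t):=\lambda_v(t-t_0)$ has the claimed asymptotics. The pointwise bound $\lambda^2\|\tilde u\|_{L^\infty}\to0$ as $t\to T$ transfers directly. The local $L^1$ smallness transfers as well, but only with the supremum taken over $[t_0,T)$.

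\emph{Step 3 (early times; the only delicate point).} It remains to extend $\lambda$ to $[0,t_0)$ and obtain $\sup_{[0,T)}\|\tilde u(t)\|_{L^1(|x|<R)}\to0$ as $R\to0$.

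\begin{itemize}
\item \emph{Choice of $\lambda$ on early times.} On $[0,t_0)$ we are free to choose any positive continuous $\lambda$, since $\tilde u:=u-\lambda^{-2}U(\cdot/\lambda)$ is simply defined as the difference. Take $\lambda(t):=\lambda(t_0)$ there.
\item \emph{Stationary-state part.} We have $\int_{|x|<R}\lambda^{-2}U(x/\lambda)\,dx\le 8R^2/\lambda^2\cdot\pi$. This is uniformly small on $[0,t_0]$ because $\lambda$ is bounded below there.
\item \emph{Solution part.} We use the continuity $t\mapsto u(t)\in L^1$ on the compact interval $[0,t_0]$, from Theorem \ref{th:lwp:L1}. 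It implies that $\{u(t)\}_{t\in[0,t_0]}$ is compact in $L^1$, hence uniformly integrable. Therefore $\sup_{[0,t_0]}\int_{|x|<R}|u(t)|\,dx\to0$ as $R\to0$.
\end{itemize}

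Combining these two bounds with Step 2 yields \eqref{bd:reminder-supercritical-result}. I expect no serious obstacle beyond checking carefully that the well-posedness class of \cite{M} coincides with the solution given by Theorem \ref{th:lwp:L1} (uniqueness of mild solutions in $L^1\cap L^\infty$). I also need to check that \cite{M}'s local-$L^1$ statement is uniform on its whole time interval $[0,T-t_0)$; if it is only stated near $T$, the same compactness argument covers any remaining intermediate compact time interval.
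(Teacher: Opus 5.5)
Your reading of the statement as radial is correct, and so is the reduction to data in $L^1\cap L^\infty$ by restarting at $t_0$ via Theorem \ref{th:lwp:L1}. Your early-time argument also works: you fix $\lambda$ on $[0,t_0]$ and use that $\{u(t)\}_{t\in[0,t_0]}$ is compact in $L^1$, hence uniformly integrable. This is more careful than the paper's one-line treatment.

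The gap is in Step 2. There you assert that \cite{M} already gives \eqref{bd:reminder-supercritical-result} and that both remainder estimates ``transfer directly''. They do not. What \cite{M} provides (Theorem 1.1 and Proposition 4.3, as used in the paper) is the law for $\lambda(t)$ together with $\lambda^2(t)u(\lambda(t)y,t)\to U(y)$ \emph{locally uniformly} in $y$. This makes $\lambda^2\tilde u$ small only on $\{|x|\le A\lambda(t)\}$ for each fixed $A$. It says nothing about the zone $|x|\ge A\lambda(t)$. There, a priori, $\lambda^2u$ could be large at some scale between $\lambda(t)$ and $\sqrt{T-t}$, or extra mass could pile up near the origin outside the bubble. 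Controlling this outer zone as $t\to T$ is the real content of the proof, so the delicate point is near the blow-up time, not at early times.

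What is missing is an outer-region argument, which the paper builds from finer results of \cite{M}.

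\emph{The $L^\infty$ estimate.} The paper proves $\lim_{A\to\infty}\limsup_{t\to T}\sup_{|x|\ge A\lambda(t)}\lambda^2(t)u(x,t)=0$ (Lemma \ref{lem:outer-linfty}) using the partial mass formulation of Remark \ref{Partial Mass Formulation}. The rescaled partial mass $w(\cdot,\tau)$ is nonincreasing on $[0,R_0]$ for large $\tau$ (Lemma 2.7 of \cite{M}). Hence $(T-t)u=rw_r+2w\le 2w$ with $r=|x|/\sqrt{T-t}$. Monotonicity then gives $\lambda^2u\le 2A^{-2}M(A\lambda(t),t)\lesssim A^{-2}$ on $A\lambda(t)\le|x|\le R_0\sqrt{T-t}$. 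On $|x|\ge 2\sqrt{T-t}$, one uses the bound $(T-t)u\le C$ from \cite{M} together with $\lambda^2/(T-t)\to0$.

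\emph{The local $L^1$ estimate near $T$.} The paper uses two inputs from \cite{M}:
\begin{itemize}
\item the mass quantization $\int_{|x|<\ell\sqrt{T-t}}u\,dx\to8\pi$ (Proposition 2.2);
\item the weak convergence $u(t)\rightharpoonup8\pi\delta_0+F$ with $F\in L^1$ (Lemma 2.1(ii)).
\end{itemize}
These give $\limsup_{t\to T}\|\tilde u(t)\|_{L^1(|x|<R)}\le 2\int_{|y|>A}U\,dy+\int_{|x|<R}F\,dx$. One then lets $A\to\infty$ and $R\to0$.

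Once these outer bounds are in place, your compactness argument handles every compact interval $[0,T-\delta]$ and the proof closes.
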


The radiality assumption is essential in the mass supercritical regime. Indeed, nonradial solutions may exhibit much richer blow-up dynamics than the single-bubble concentration described in Theorem~\ref{th:supercritical}. For instance, \cite{BDdPM} constructed finite-time blow-up with multiple concentration points. Even more remarkably, \cite{CGMN2} exhibited a collision-induced blow-up mechanism producing a $16\pi$-mass singularity together with a new asymptotic rate, providing a rigorous proof of a formal argument of \cite{SSV}. These examples show that the radial asymptotic picture does not persist in the nonradial setting where the blow-up dynamics may depend on the geometry of the initial datum rather than only on conserved quantities of the flow.

\subsection{Organization of the article}
The paper is organized as follows. In Section~\ref{lwp-parReg:section} we establish local well-posedness in $L^1(\mathbb R^2)$. In Section~\ref{sec:lwp-L1x2} we study the flow in the weighted space $L^1(\langle x\rangle^2dx)$, proving the propagation of the second moment and establishing the well-definedness of the free energy functional. Finally, Section~\ref{sec:classification} is devoted to the asymptotic analysis. We first establish Theorems~\ref{th:subcritical-measures} and~\ref{th:supercritical}, and then conclude with the proof of our main result, Theorem~\ref{thm:classification}.

\subsection{Acknowledgements}

Funded by the European Union. Views and opinions expressed are however those of the author(s) only and do not necessarily reflect those of the European Union or the European Research Council Executive Agency. Neither the European Union nor the granting authority can be held responsible for them.

This work is supported by ERC grant (FloWAS, No. 101117820, 
DOI 10.3030/101117820). C. Collot was supported by the CY Initiative of Excellence Grant "Investissements d'Avenir" ANR-16-IDEX-0008 via Labex MME-DII, the grant "Chaire Professeur Junior" ANR-22-CPJ2-0018- 01 of the French National Research Agency, and the grant BOURGEONS ANR-23-CE40-0014-01 of the French National Research Agency.

\section{Notation}

The heat kernel is
$$
G(x,t)= \frac{1}{4\pi t} \exp \left( \frac{|x|^2}{4t}\right)
$$
For two nonnegative quantities $A$ and $B$, we use the notation $A\lesssim B$ to indicate that $A\leq CB$ for an absolute constant $C>0$.

\section{Local well-posedness in $L^1$ and parabolic regularization}\label{lwp-parReg:section}

In this section we establish the local well-posedness theory for the Keller--Segel system \eqref{KS} in the critical space \(L^1(\mathbb R^2)\), together with the corresponding parabolic regularization properties. Throughout this paper we consider the notion of mild solution introduced in the following definition.
	\begin{definition}\label{def:solution}
		
		Given $u_0\in L^1$, a mild solution on $[0,T)$ is a function $u$ such that
		\[
		u\in C([0,T),L^1(\mathbb R^2))
		\]
		and
		\[
		t^{1/4}u(t)\in L^\infty_{\mathrm{loc}}([0,T),L^{4/3}(\mathbb R^2)),
		\qquad
		\lim_{t\downarrow 0} t^{1/4}\|u(t)\|_{L^{4/3}}=0,
		\]
		and satisfying
		\begin{equation}\label{id:mild-solution}
			u(t)=G(t)*u_0+\int_0^t \nabla G(t-s)*\cdot\bigl(u(s)\nabla \Phi_{u(s)}\bigr)\,ds
		\end{equation}
		for every $t\in[0,T)$.
        
        Given $u_0\in L^p$ for $4/3\leq p<2$, we say that a mild solution on $[0,T)$ is a function $u\in C([0,T),L^p)$ that satisfies \eqref{id:mild-solution} for every $t\in[0,T)$.
		
	\end{definition}
	
	The notion of mild solution is only needed near the initial time. Indeed, the next result shows that every mild solution instantaneously regularizes and becomes a classical solution. This result was already introduced in \cite{BC}, but we believe that it could be useful to give a complete proof that we postpone to Section~\ref{subsec:lwp-L1}. Recently, in \cite{EFJS}, the JKO scheme was used by exploiting the variational structure induced by \eqref{freeenergy}. This approach yields universal bounds. However, arguments based on the free energy require additional constraints on the initial data. By contrast, the argument we present here relies only on the $L^{1}$-norm of the initial data. Unlike the present \(L^1(\mathbb R^2)\) setting, where
$t^{1/4}\|e^{t\Delta}u_0\|_{L^{4/3}}\to0
\quad\text{as }t\to0$,
the measure-valued framework considered in \cite{BM} allows atomic initial data, for which the standard perturbative argument fails and must be replaced by a decomposition around nonlinear self-similar profiles.
 
    The local well-posedness of the Keller-Segel system \eqref{KS} in $L^1$ can be found in \cite{W}. The main result of this section is the Theorem below, in which we add some additional properties of the flow such as regularization results. The proof follows the by-now classical arguments as initiated for the semilinear heat equation by Weissler \cite{Weissler1,Weissler2}. Theorem \ref{th:lwp:L1} serves in the present article as a unifying framework for several previously known results, namely \cite{N1}, \cite{M}, and \cite{BC}.

	\begin{theorem}\label{th:lwp:L1}
		
		Let $u_0\in L^1(\mathbb R^2)$. Then there exists a maximal time of existence $T=T(u_0)$ and a mild solution $u$ to the Keller--Segel system \eqref{KS} on $[0,T)$ such that
		
		\begin{itemize}
			\item \emph{Continuity of the flow in $L^1$.} We have $u\in C([0,T(u_0)),L^1(\mathbb R^2))$, and the conservation of mass \eqref{id:mass-conservation}. Moreover, the flow is locally continuous in the sense that for any $\delta>0$ and $0<T'<T$, there exists $\varepsilon>0$ (depending on $u_0$, $\delta$, and $T'$) such that if $\|v_0-u_0\|_{L^1}\leq \varepsilon$, then $T(v_0)\geq T'$ and
			\[
			\|v(t)-u(t)\|_{L^1}\leq \delta
			\]
			for all $t\in[0,T']$.
			
			\item \emph{Instantaneous regularization.} We have
			$u,\nabla\Phi_u\in C^\infty((0,T)\times\mathbb R^2)$
			and $u$ is a classical solution to \eqref{KS} on $(0,T)$. Moreover, for every $k\in\mathbb N$,
			$u\in C((0,T),W^{k,\infty}\cap W^{k,1}(\mathbb R^2))$.
			
			\item \emph{Blow-up criterion.} We have
			\[
			T(u_0)<\infty
			\quad\Longleftrightarrow\quad
			\lim_{t\uparrow T}\|u(t)\|_{L^\infty(\R^{2})}=\infty .
			\]
			Moreover, if $T(u_0)<\infty$, then
			\begin{align*}
				\|u(t)\|_{L^{\infty}(\R^{2})}\gtrsim \frac{1}{T-t}.
			\end{align*}
			\item \emph{Uniqueness.} The solution is unique in the class of mild solutions. More precisely, if $u'$ is another mild solution on $[0,T')$ with the same initial datum $u_0$, then $T'\leq T$ and $u=u'$ on $[0,T')$.
		\end{itemize}
		
	\end{theorem}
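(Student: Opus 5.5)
The plan is a Weissler-type fixed point in the auxiliary norm $\sup_{0<t<T_0} t^{1/4}\|u(t)\|_{L^{4/3}}$. The bilinear Duhamel term is
\[
B(u,v)(t)=\int_0^t \nabla G(t-s)*\cdot\bigl(u(s)\nabla\Phi_{v(s)}\bigr)ds .
\]
Since $\nabla\Phi_v$ is $\nabla(-\Delta)^{-1}v$, Hardy--Littlewood--Sobolev gives $\|\nabla\Phi_v\|_{L^4}\lesssim\|v\|_{L^{4/3}}$, and H\"older then gives $\|u\nabla\Phi_v\|_{L^1}\lesssim \|u\|_{L^{4/3}}\|v\|_{L^{4/3}}$. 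The kernel estimate $\|\nabla G(t-s)\|_{L^{4/3}}\lesssim (t-s)^{-3/4}$ then yields
\[
t^{1/4}\|B(u,v)(t)\|_{L^{4/3}}\lesssim t^{1/4}\int_0^t (t-s)^{-3/4}s^{-1/2}\,ds\,\|u\|_X\|v\|_X\lesssim \|u\|_X\|v\|_X .
\]
Similarly, $\|B(u,v)(t)\|_{L^1}\lesssim \int_0^t (t-s)^{-1/2}s^{-1/2}ds\,\|u\|_X\|v\|_X$. For the linear part, $t^{1/4}\|G(t)*u_0\|_{L^{4/3}}\to0$ as $t\to0$ for $u_0\in L^1$, by density of $L^1\cap L^{4/3}$. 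So on a short interval $[0,T_0]$, with $T_0$ depending on $u_0$ and not only on $\|u_0\|_{L^1}$, the map is a contraction in a small ball of $X$. This gives existence and membership in $C([0,T_0],L^1)$.

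\textbf{Local continuity of the flow.} Uniformity of $T_0$ over an $L^1$-neighbourhood of $u_0$ follows from $\|G(t)*(v_0-u_0)\|_{L^{4/3}}\lesssim t^{-1/4}\|v_0-u_0\|_{L^1}$. Continuity of the flow then follows from the contraction estimate, and iterating it up to $T'$ uses the regularized solution at positive times.

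\textbf{Uniqueness.} Two mild solutions with the same datum are both small in $X$ near $t=0$, because Definition~\ref{def:solution} requires $t^{1/4}\|u(t)\|_{L^{4/3}}\to 0$. Hence they agree near $t=0$. After that I continue by uniqueness for $L^{4/3}$-data, where Definition~\ref{def:solution} gives a second mild-solution notion, via a standard Gronwall/continuation argument.

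\textbf{Regularization.} I bootstrap $L^{4/3}\to L^p\to L^\infty$ on $[t_0,T)$ using the smoothing $\|\nabla G(t)\|_{L^q}\lesssim t^{-3/2+1/q}$. Then I differentiate the Duhamel formula, writing $\partial^k$ of the nonlinearity and putting half the derivatives on the kernel. This gives $W^{k,\infty}\cap W^{k,1}$ bounds, and smoothness in time comes from the equation. Hence $u$ is a classical solution.

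\textbf{Mass conservation.} Integrating the divergence form of the equation gives $\int u(t)=\int u_0$, since the Duhamel term has zero integral.

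\textbf{Blow-up criterion.} The main obstacle is the blow-up criterion, because the local existence time in $L^1$ is not controlled by $\|u_0\|_{L^1}$ alone. I restart instead from $u(t)\in L^1\cap L^\infty$ and run a fixed point in $L^\infty_t(L^1\cap L^\infty)$. The interpolation bound $\|\nabla\Phi_u\|_{L^\infty}\lesssim \|u\|_{L^1}^{1/2}\|u\|_{L^\infty}^{1/2}$ gives
\[
\Bigl\|\int_0^t \nabla G(t-s)*(u\nabla\Phi_u)\,ds\Bigr\|_{L^\infty}\lesssim \int_0^t (t-s)^{-1/2}\,M^{1/2}\|u\|_{L^\infty}^{3/2}\,ds .
\]
So existence holds on a time $\tau\gtrsim \min\bigl(1,(M\|u(t)\|_{L^\infty})^{-1}\bigr)$, with the precise power fixed by the computation. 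If $\|u\|_{L^\infty}$ stayed bounded along a sequence $t\to T$, the solution would extend beyond $T$. Conversely, the existence-time bound $T-t\ge\tau(\|u(t)\|_{L^\infty})$ gives the lower bound on $\|u(t)\|_{L^\infty}$ as $t\to T$. To reach exactly $(T-t)^{-1}$ I would instead use the scaling-invariant bound $\|\nabla\Phi_u\|_{L^\infty}\lesssim M^{1/2}\|u\|_{L^\infty}^{1/2}$ together with $\|u\|_{L^\infty}\sim A$, which gives time $\gtrsim (M A)^{-1}$. This yields $\|u(t)\|_{L^\infty}\gtrsim 1/(M(T-t))$, and the dependence on $M$ is absorbed or accepted in the constant.
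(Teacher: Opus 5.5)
Your existence, uniqueness, continuous dependence and regularization steps follow the paper's route: a Brezis--Cazenave/Weissler fixed point in $t^{1/4}L^{4/3}$, then a restart in $L^{4/3}$ and a bootstrap. Your mass-conservation argument is a clean alternative to the paper's approximation argument: the Duhamel term has zero integral, which is legitimate since $\int_0^t\|\nabla G(t-s)*(u\nabla\Phi_u)(s)\|_{L^1}ds\lesssim\int_0^t(t-s)^{-1/2}s^{-1/2}ds<\infty$. Your quantitative restart in $L^1\cap L^\infty$ correctly gives the blow-up alternative, and it gives it with $\lim$ directly.

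The gap is in the final rate. In this paper $\gtrsim$ means up to an absolute constant (see the Notation section; the proof of Corollary~\ref{ImprovedLinf} actually gives the constant $1$). You only obtain $\|u(t)\|_{L^\infty}\ge c/(M(T-t))$ with $M=\|u_0\|_{L^1}$, and you propose to accept this. The $M$-dependence is not bookkeeping. Both $(T-t)\|u(t)\|_{L^\infty}$ and $M$ are invariant under $u\mapsto\lambda^2u(\lambda^2t,\lambda x)$, so rescaling cannot remove it. Your fixed point controls the drift through $\|\nabla\Phi_u\|_{L^\infty}\lesssim M^{1/2}\|u\|_{L^\infty}^{1/2}$, so it necessarily produces an existence time of the form $f(M)/\|u(t_0)\|_{L^\infty}$, here with $f(M)=c/M$. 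That degenerates as $M\to\infty$.

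The missing idea is to use the maximum principle instead of a perturbative estimate. Since $-\Delta\Phi_u=u$, the equation can be written as
\[
u_t=\Delta u-\nabla\Phi_u\cdot\nabla u+u^2 .
\]
On $[t_0,t_1]\subset(0,T)$ the drift $\nabla\Phi_u$ is smooth and bounded. Compare $u$ with the spatially constant solution $y(t)=\bigl(\|u(t_0)\|_{L^\infty}^{-1}-(t-t_0)\bigr)^{-1}$ of $y'=y^2$; this gives $u\le y$. The function $-u$ cannot grow either, since it is a subsolution of the drift-diffusion equation without the quadratic term. So if $T<t_0+\|u(t_0)\|_{L^\infty}^{-1}$, $u$ would stay bounded by $y(T)<\infty$ on $[t_0,T)$, contradicting your blow-up alternative. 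Hence $T-t_0\ge\|u(t_0)\|_{L^\infty}^{-1}$, which is the mass-independent bound the statement asserts. The transport term, which is what forces $M$ into your estimate, does not enter this comparison argument at all.
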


\begin{remark}

\noindent \emph{Improvement of the blow-up criterion}. We emphasize that the blow-up criterion proved here has been improved in the radial setting. More precisely, it is known (see \cite{S} and \cite{M} for further details) that
\begin{align}\label{typeIIblow-up}
	\limsup_{t\to T}(T-t)\|u(t)\|_{L^{\infty}(\R^{2})}=\infty.
\end{align}
Moreover, in \cite{NaiSu} the same result was established for bounded domains without assuming radial symmetry of the initial data but under the finite entropy condition. In \cite{SBook}, the authors prove \eqref{typeIIblow-up} under the additional assumption that the initial data have finite second moment. We believe it holds true without the second momentum assumption.

\smallskip

\noindent \emph{Comparison with the quadratic heat equation}. The Keller-Segel equation \eqref{KS} can be rewritten as a non-local evolution equation
$$
u_t= \Delta u +u^2+((2\pi)^{-1}\frac{x}{|x|^2}*u)\cdot \nabla u.
$$
However, the conclusions of Theorem \ref{th:lwp:L1} fail for the two-dimensional quadratic equation $u_t= \Delta u +u^2$, see Section 7.5. in \cite{BC} where this "doubly critical" case is investigated. The key point behind the validity of Theorem \ref{th:lwp:L1} for the Keller-Segel equation is the divergence structure of the nonlinearity.

\end{remark}

Using the Hardy--Littlewood--Sobolev, Young, and H\"older inequalities, one verifies that for a mild solution \ref{def:solution} the map
\[
s\mapsto \nabla G(t-s)*\cdot\bigl(u(s)\nabla\Phi_{u(s)}\bigr)
\]
belongs to $L^1([0,t]\times\mathbb R^2)$ for every $t\in[0,T)$. Hence the integral term in \eqref{id:mild-solution} is well defined and the identity holds in the sense of measurable functions. For $4/3\leq p<2$, if
\[
u\in C([0,T),L^p(\mathbb R^2)),
\]
then
\[
s\mapsto \nabla G(t-s)*\cdot\bigl(u(s)\nabla\Phi_{u(s)}\bigr)
\in L^1([0,t],L^p(\mathbb R^2)),
\]
so that \eqref{id:mild-solution} is also well defined as an identity in $L^p(\mathbb R^2)$. Throughout this section, as in all articles of the literature, the Poisson field is defined by
\begin{equation}\label{gradient}
	\nabla \Phi_u(x):=
	-\frac{1}{2\pi}
	\int_{\mathbb R^2}
	\frac{x-y}{|x-y|^2}u(y)\,dy.
\end{equation}
This definition is meaningful for every $u\in L^1(\mathbb R^2)$ (as the integral is absolutely convergent for almost every $x$) and does not require any additional decay assumptions at infinity. The next Lemma establishes the basic properties of the Poisson field that will be used repeatedly in the sequel.

\begin{lemma}\label{NonLinLpEst}
	
	Let $1<p<2$ and $1<q\le \infty$ satisfy
	\begin{equation}
		\frac1p+\frac1q\le\frac32.
	\end{equation}
	Then there exists $C>0$ such that for every $u\in L^p(\mathbb R^2)$, $v\in L^q(\mathbb R^2)$, and every $1\le r\le\infty$ satisfying
	\begin{equation}
		\frac1r\le \frac1p+\frac1q-\frac12,
	\end{equation}
	there holds
	\begin{equation}\label{nonlinestimate}
	\big\| \nabla G(t) \ast ( v \nabla \Phi_{u} ) \big\|_{L^r(\mathbb{R}^2)} 
\le \frac{C}{t^{\frac{1}{p}+\frac{1}{q}-\frac{1}{r}}} \, \|u\|_{L^p(\mathbb{R}^2)} \, \|v\|_{L^q(\mathbb{R}^2)}.
	\end{equation}
	
\end{lemma}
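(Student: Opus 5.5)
The plan is to combine Hardy--Littlewood--Sobolev for the Poisson field, H\"older for the product, and Young's convolution inequality for $\nabla G(t)$, and then check the exponents. First, from \eqref{gradient} we have $|\nabla\Phi_u|\lesssim |\cdot|^{-1}*|u|$. Since $1<p<2$, the Hardy--Littlewood--Sobolev inequality in dimension two gives
\[
\|\nabla \Phi_u\|_{L^{s}}\lesssim \|u\|_{L^p},\qquad \frac1s=\frac1p-\frac12\in(0,\tfrac12).
\]
Next, H\"older's inequality bounds the product:
\[
\|v\nabla\Phi_u\|_{L^m}\le \|v\|_{L^q}\|\nabla\Phi_u\|_{L^s}\lesssim \|u\|_{L^p}\|v\|_{L^q},\qquad \frac1m=\frac1p+\frac1q-\frac12.
\]
The hypothesis $\frac1p+\frac1q\le \frac32$ is exactly what guarantees $\frac1m\le 1$. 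Moreover $\frac1m>\frac1q-\frac12+\frac12\cdot 0\ge$ something positive, since $\frac1p>\frac12$, so $m\in[1,\infty)$ is an admissible exponent.

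Finally, Young's inequality handles the heat-kernel convolution:
\[
\|\nabla G(t)*f\|_{L^r}\le \|\nabla G(t)\|_{L^k}\|f\|_{L^m},\qquad 1+\frac1r=\frac1k+\frac1m.
\]
The condition $\frac1r\le\frac1m$ ensures $k\in[1,\infty]$. By scaling, $\nabla G(t,x)=t^{-3/2}(\nabla G)(1,x/\sqrt t)$, so
\[
\|\nabla G(t)\|_{L^k}= C_k\, t^{-\frac32+\frac1k}=C_k\,t^{-\frac12-\frac1m+\frac1r}.
\]
Substituting $\frac1m=\frac1p+\frac1q-\frac12$, the power of $t$ becomes
\[
-\Big(\frac1p+\frac1q-\frac1r\Big),
\]
which is the claimed rate. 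The constant $C_k$ is uniform because $\nabla G(1)$ is Schwartz; for the finitely many exponent ranges it is bounded by $\|\nabla G(1)\|_{L^1}+\|\nabla G(1)\|_{L^\infty}$.

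The only delicate point is the endpoint bookkeeping. HLS requires $1<p<2$ strictly, and this is assumed. When $r=\infty$ and $m=1$, we get $k=\infty$, which Young still allows. The case $q=\infty$ is fine in H\"older. No step is really an obstacle; the main thing to verify is that every exponent stays in $[1,\infty]$, which the two stated inequalities ensure.
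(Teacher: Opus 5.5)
Your proof is correct and follows the paper's argument exactly. The three steps match: Hardy--Littlewood--Sobolev gives $\nabla\Phi_u\in L^{s}$ with $\frac1s=\frac1p-\frac12$; H\"older places $v\nabla\Phi_u$ in $L^m$ with $\frac1m=\frac1p+\frac1q-\frac12\le 1$; and Young's inequality is applied with $\|\nabla G(t)\|_{L^k}\lesssim t^{-\frac32+\frac1k}$. (The garbled line about $m<\infty$ should simply read $\frac1m>\frac1q\ge 0$ since $\frac1p>\frac12$. Your interpolation bound $\|\nabla G(1)\|_{L^k}\le \|\nabla G(1)\|_{L^1}+\|\nabla G(1)\|_{L^\infty}$ correctly makes $C$ uniform in $r$.)
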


\begin{proof}
	By the Hardy--Littlewood--Sobolev inequality,
	\begin{equation}
		\|\nabla\Phi_u\|_{L^a(\mathbb R^2)}
		\le C \|u\|_{L^p(\mathbb R^2)},
		\qquad
		\frac1a=\frac1p-\frac12.
	\end{equation}
	Defining $b$ by
		$\frac1b=\frac1a+\frac1q$,
	H\"older's inequality yields
	\begin{equation}
		\|v\nabla\Phi_u\|_{L^b(\mathbb R^2)}
		\le
		C
		\|u\|_{L^p(\mathbb R^2)}
		\|v\|_{L^q(\mathbb R^2)}.
	\end{equation}
	Since $\frac1r\le\frac1b$, there exists $w\in[1,\infty]$ such that
		$1+\frac1r=\frac1b+\frac1w$.
	Young's inequality and the estimate
	\begin{align*}
		\|\nabla G(t)\|_{L^w(\mathbb R^2)}
		\le
		Ct^{-\frac32+\frac1w}
	\end{align*}
	yield \eqref{nonlinestimate}.
\end{proof}

\subsection{Local well-posedness in $L^p$ for $4/3\le p<2$}

We start with the local well-posedness in the supercritical space $L^p$ for $4/3\le p <2$ as this case is easier than for $L^1$.

The lower bound $p\ge 4/3$ is precisely the condition that allows us to apply Lemma~\ref{NonLinLpEst} with $q=r=p$, which is the key estimate in the fixed-point argument below.

\begin{theorem}\label{ExistenceThm}
	
	Let $\frac43\le p<2$. Then for any $u_0\in L^p(\mathbb R^2)$ there exists
	$T^*=T^*(\|u_0\|_{L^p})
	=
	\delta \|u_0\|_{L^p}^{-\frac{p}{p-1}}$
	for some $\delta>0$, and a unique mild solution
	$u\in C([0,T^*],L^p(\mathbb R^2))$
	to \eqref{KS}. Moreover, $\|u(t)\|_{L^p(\mathbb R^2)}
	\le2\|u_0\|_{L^p(\mathbb R^2)}$
	for all $0\le t\le T^*$.
	
	As a consequence, there exists a unique maximal mild solution in
	$C((0,T),L^p(\mathbb R^2))$. Furthermore, for some universal constant
	$C>0$,
	\[
	T<\infty
	\qquad\Longleftrightarrow\qquad
	\lim_{t\uparrow T}\|u(t)\|_{L^p(\mathbb R^2)}=\infty.
	\]
	Moreover, if $T<\infty$, then
	\[
	\|u(t)\|_{L^p(\mathbb R^2)}
	\ge
	\frac{C}{(T-t)^{1-\frac1p}}
	\]
	for all $t<T$.
	
\end{theorem}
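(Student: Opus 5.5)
We run a Banach fixed point for the Duhamel map
\[
\Gamma(u)(t)=G(t)*u_0+\int_0^t \nabla G(t-s)*\cdot\bigl(u(s)\nabla\Phi_{u(s)}\bigr)\,ds
\]
on the closed ball $B=\{u\in C([0,T^*],L^p):\ \sup_{[0,T^*]}\|u(t)\|_{L^p}\le 2\|u_0\|_{L^p}\}$. The linear term satisfies $\|G(t)*u_0\|_{L^p}\le\|u_0\|_{L^p}$ by Young's inequality. For the nonlinear term we apply Lemma~\ref{NonLinLpEst} with $q=r=p$. This is admissible precisely because $p\ge 4/3$: the condition $\frac2p\le\frac32$ holds, and $\frac1p\le\frac2p-\frac12$ holds since $p<2$. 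The lemma gives
\[
\bigl\|\nabla G(t-s)*\bigl(u\nabla\Phi_v\bigr)\bigr\|_{L^p}\le C(t-s)^{-1/p}\|u\|_{L^p}\|v\|_{L^p},
\]
and the singularity is integrable because $p>1$. Hence
\[
\|\Gamma(u)(t)-G(t)*u_0\|_{L^p}\le C t^{1-1/p}\sup_s\|u(s)\|_{L^p}^2 .
\]
For differences, bilinearity gives $u\nabla\Phi_u-v\nabla\Phi_v=(u-v)\nabla\Phi_u+v\nabla\Phi_{u-v}$, which yields the Lipschitz bound $\|\Gamma u-\Gamma v\|\le C T^{1-1/p}\cdot 4\|u_0\|_{L^p}\|u-v\|$.

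Choosing $T^*=\delta\|u_0\|_{L^p}^{-p/(p-1)}$ with $\delta$ small makes $C(T^*)^{1-1/p}\|u_0\|_{L^p}=C\delta^{1-1/p}$ small. Then $\Gamma$ maps $B$ into itself and is a contraction. Time continuity of $\Gamma(u)$ in $L^p$ follows from strong continuity of the heat semigroup on $L^p$, together with a standard splitting of the Duhamel integral near $t$ using the same integrable bound. This gives existence on $[0,T^*]$ and the bound $\|u(t)\|_{L^p}\le 2\|u_0\|_{L^p}$.

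Uniqueness among all mild solutions in $C([0,T],L^p)$, not only those in $B$, is obtained by a continuity argument. Given two solutions, both are bounded by some $K$ on a compact interval. The difference estimate gives $\|u-v\|_{L^\infty([t_0,t_0+\tau],L^p)}\le C\tau^{1-1/p}K\|u-v\|$, so they agree on short intervals. Iterating, using the semigroup property of mild solutions when restarting at $t_0$, they agree everywhere. The semigroup property, namely that a mild solution restarted at time $t_0$ is again mild, follows from $G(t)*G(s)=G(t+s)$ and splitting the integral.

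Gluing local solutions then yields a unique maximal solution on $[0,T)$. For the blow-up criterion, suppose $T<\infty$ and fix $t<T$. Restarting from $u(t)$, the solution exists at least on $[t,t+\delta\|u(t)\|_{L^p}^{-p/(p-1)}]$, so maximality forces $T-t\ge\delta\|u(t)\|_{L^p}^{-p/(p-1)}$. This rearranges to $\|u(t)\|_{L^p}\ge C(T-t)^{-(1-1/p)}$, which in particular shows the norm tends to infinity. Conversely, if $\|u(t)\|_{L^p}\to\infty$ then clearly $T<\infty$, since the solution is continuous in $L^p$ on any compact subinterval of $[0,T)$.

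The only genuinely delicate point is verifying the exponent constraints in Lemma~\ref{NonLinLpEst}, which is where $p\ge 4/3$ enters, and the continuity at $t=0$ of the Duhamel term. The latter is handled directly by the factor $t^{1-1/p}\to0$.
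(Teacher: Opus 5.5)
Your proposal is correct and follows the paper's proof: you run a contraction for the Duhamel map on the ball of radius $2\|u_0\|_{L^p}$ in $C([0,T^*],L^p)$ using Lemma~\ref{NonLinLpEst} with $q=r=p$, then restart from $u(t)$ to obtain the maximal solution and $T-t\ge\delta\|u(t)\|_{L^p}^{-p/(p-1)}$, and your unconditional uniqueness argument fills in a detail the paper leaves implicit. The one slip is your justification of the reverse implication, which is a non sequitur: continuity on compact subintervals says nothing about $t\to\infty$, and read literally with $T=\infty$ that implication even fails, e.g.\ for critical-mass solutions concentrating in infinite time; the meaningful content there is the continuation criterion (boundedness of $\|u(t)\|_{L^p}$ near a finite $T$ allows extension), which your restart argument already provides.
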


\begin{proof}
	Let
	$X_T:=C([0,T],L^p(\mathbb R^2))$,
	with
	\[
	\|v\|_{X_T}
	:=
	\sup_{0\le t\le T}
	\|v(t)\|_{L^p(\mathbb R^2)}.
	\]
	Define
	\[
	F(v)(t)
	:=
	G(t)u_0
	-
	\int_0^t
	\nabla G(t-s)\ast\cdot
	\bigl(v(s)\nabla\Phi_{v(s)}\bigr)\,ds.
	\]
	Since $p\ge 4/3$, Lemma~\ref{NonLinLpEst} applies with $q=r=p$. Hence
	\[
	\|F(v)\|_{X_T}
	\le
	\|u_0\|_{L^p(\mathbb R^2)}
	+
	C T^{1-\frac1p}
	\|v\|_{X_T}^2,
	\]
	and
	\[
	\|F(v)-F(u)\|_{X_T}
	\le
	C T^{1-\frac1p}
	\bigl(\|u\|_{X_T}+\|v\|_{X_T}\bigr)
	\|u-v\|_{X_T}.
	\]
    The continuity $F(v)\in C([0,T),L^p)$ follows from similar estimates, since this is standard we refer to \cite{BC}. Therefore, choosing
	\[
	T
	=
	\delta
	\|u_0\|_{L^p(\mathbb R^2)}^{-\frac{p}{p-1}}
	\]
	with $\delta>0$ sufficiently small, $F$ is a contraction on
	$B_{X_T}(2\|u_0\|_{L^p(\mathbb R^2)})$.
	By Banach's fixed-point theorem, there exists a unique mild solution
	$u\in C([0,T],L^p(\mathbb R^2))$ satisfying
	\[
	\|u(t)\|_{L^p(\mathbb R^2)}
	\le
	2\|u_0\|_{L^p(\mathbb R^2)}
	\qquad
	0\le t\le T.
	\]
	
	Restarting the argument at any time $t_0<T$ gives the maximal mild solution and the continuation criterion. If $T<\infty$, then
	$T-t_0
	\ge
	\delta
	\|u(t_0)\|_{L^p(\mathbb R^2)}^{-\frac{p}{p-1}}$,
	hence
	\[
	\|u(t_0)\|_{L^p(\mathbb R^2)}
	\ge
	\frac{C}{(T-t_0)^{1-\frac1p}}.
	\]
	Conversely, if
	$\sup_{t<T}\|u(t)\|_{L^p(\mathbb R^2)}<\infty$,
	then the same local theory extends the solution beyond $T$. This proves the blow-up criterion.
\end{proof}

\subsection{Parabolic regularization}\label{RegularizationSection}

The next proposition provides a first regularization mechanism for mild solutions. It follows the standard approach for semilinear parabolic equation which can be found in \cite{BC}. Starting from an $L^p$ solution with $4/3\le p<2$, any additional $L^q$ control immediately propagates to higher Lebesgue spaces.

\begin{proposition}\label{AddRegProp}
	
	Let $u\in C([0,T],L^p(\mathbb R^2))$ be a mild solution for some
	$\frac43\le p<2$, and suppose moreover that
	\[
	u\in C([0,T],L^q(\mathbb R^2))
	\]
	for some $q\ge p$. Then the following regularity properties hold:
	\begin{itemize}
		\item if $\frac1p+\frac1q\le 1$, then
		\[
		u\in C((0,T],L^r(\mathbb R^2))
		\]
		for every $q\le r\le \infty$;
		
		\item if $\frac1p+\frac1q>1$, then
		\[
		u\in C((0,T],L^r(\mathbb R^2))
		\]
		for every
		\[
		q\le r<
		\Big(\frac1p+\frac1q-1\Big)^{-1}.
		\]
	\end{itemize}
		Moreover, in each case, for any $0<T_0<T$ there exists $C(T_0,T,\sup_{0\leq t\leq T}\| u(t)\|_{L^p\cap L^q})$ which is non-decreasing with the third variable such that $\| u(t)\|_{L^r}\leq C(T_0,T,\sup_{0\leq t\leq T}\| u(t)\|_{L^p\cap L^q})$ for all $T_0\leq t \leq T$.
	
\end{proposition}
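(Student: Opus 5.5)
We bootstrap on the Duhamel formula \eqref{id:mild-solution}, restarted at an intermediate time $t_0\in(0,T)$:
\[
u(t)=G(t-t_0)*u(t_0)+\int_{t_0}^t \nabla G(t-s)*\cdot\bigl(u(s)\nabla\Phi_{u(s)}\bigr)\,ds .
\]
The estimate used is Lemma~\ref{NonLinLpEst} with $\Phi$ generated by the $L^p$ factor and the multiplier $v=u(s)$ taken in $L^q$. Set $N:=\sup_{0\le t\le T}\|u(t)\|_{L^p\cap L^q}$. The lemma gives
\[
\bigl\|\nabla G(t-s)*(u\nabla\Phi_u)\bigr\|_{L^r}\lesssim (t-s)^{-(\frac1p+\frac1q-\frac1r)}N^2
\qquad\text{whenever}\qquad \frac1r\le \frac1p+\frac1q-\frac12 .
\]
This singularity is integrable in $s$ iff $\frac1r>\frac1p+\frac1q-1$.

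For the linear term, $\|G(t-t_0)*u(t_0)\|_{L^r}\lesssim (t-t_0)^{-(\frac1q-\frac1r)}N$. Taking $t_0=t/2$ (or $t_0=T_0/2$ when $t\ge T_0$) makes this bounded by a constant depending on $T_0$. One step of the bootstrap therefore moves $u$ from $L^q$ into every $L^r$ with
\[
\max\Bigl(0,\frac1p+\frac1q-1\Bigr)<\frac1r\le \frac1q ,
\]
with bounds on $[T_0,T]$ depending only on $T_0,T,N$ and nondecreasing in $N$. The constraint $\frac1r\le\frac1p+\frac1q-\frac12$ is harmless: $p<2$ gives $\frac1p-\frac12>0$, so $\frac1q$ already satisfies it. In the case $\frac1p+\frac1q>1$, this single step gives exactly the claimed range $q\le r<(\frac1p+\frac1q-1)^{-1}$.

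In the case $\frac1p+\frac1q\le1$, one step gives every finite $r\ge q$. To reach $r=\infty$, we iterate once more from a large finite exponent $q'$ on $[T_0/2,T]$, where now $\frac1p+\frac1{q'}<1$. The case $r=\infty$ is then covered, since $\frac1r=0>\frac1p+\frac1{q'}-1$ and the time singularity $(t-s)^{-(\frac1p+\frac1{q'})}$ is integrable. The monotone dependence on $N$ survives because each step only uses bounds of this form, halving $T_0$ each time.

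Continuity in time into $L^r$ on $(0,T]$ follows from the same estimates applied to differences. For the linear part this uses strong continuity of the heat semigroup on $L^r$ for $r<\infty$, and smoothing for $r=\infty$ applied to $u(t_0)\in L^{r'}$. For the Duhamel part we split $\int_{t_0}^{t+h}-\int_{t_0}^t$, use dominated convergence together with continuity of $u$ in $L^p\cap L^q$, and apply the bilinear estimate to $u\nabla\Phi_u-u'\nabla\Phi_{u'}$.

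The main obstacle is bookkeeping: each application of Lemma~\ref{NonLinLpEst} has to satisfy both the Hölder/HLS admissibility condition and time integrability. The $r=\infty$ endpoint in particular needs the intermediate step through large finite exponents instead of a direct jump. I expect to handle this by first fixing the integrable range above and then, if needed, iterating finitely many times with $\frac1{q_{k+1}}=\frac1{q_k}-\bigl(1-\frac1p\bigr)+\epsilon$.
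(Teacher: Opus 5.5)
Your proposal is correct and follows the same route as the paper: split off the heat-kernel part of the Duhamel formula, use heat smoothing for it, and bound the nonlinear term by Lemma~\ref{NonLinLpEst} with the field in $L^p$ and the multiplier in $L^q$, so that the Duhamel integral converges exactly when $\frac1p+\frac1q-\frac1r<1$. Your extra pass through a large finite exponent $q'$ is genuinely needed for $r=\infty$ in the borderline case $\frac1p+\frac1q=1$: there the paper's one-step argument produces the non-integrable kernel $(t-s)^{-1}$, so it does not literally cover that endpoint.
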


\begin{proof}
	
	We write
	$u=v+w$,
	where
	\[
	v(t)=G(t)u_0,
	\qquad
	w(t)
	=
	-\int_0^t
	\nabla G(t-s)\ast\cdot
	\bigl(u(s)\nabla\Phi_{u(s)}\bigr)\,ds.
	\]
	By the standard smoothing properties of the heat semigroup
	$v\in C((0,T],L^r(\mathbb R^2))$
	for every $r\in[q,\infty]$, and for every $T_0>0$,
	$\sup_{T_0\le t\le T}
	\|v(t)\|_{L^r(\mathbb R^2)}
	<\infty$. Let
	$M=
	\sup_{0\le t\le T}
	\|u(t)\|_{L^p(\mathbb R^2)\cap L^q(\mathbb R^2)}$.
	Applying Lemma~\ref{NonLinLpEst}, we obtain
	\[
	\|w(t)\|_{L^r(\mathbb R^2)}
	\le
	CM^2
	\int_0^t
	(t-s)^{-\gamma}\,ds,
	\qquad
	\gamma
	=
	\frac1p+\frac1q-\frac1r.
	\]
	The integral is finite precisely when $\gamma<1$, that is,
	$\frac1p+\frac1q-\frac1r<1$.
	In this case,
	\[
	\sup_{0\le t\le T}
	\|w(t)\|_{L^r(\mathbb R^2)}
	\le C(M,T).
	\]
	Moreover, arguing exactly as in the proof of Theorem~\ref{ExistenceThm}, the continuity of the Duhamel term follows from the integrability of the kernel $(t-s)^{-\gamma}$. Hence
	$w\in C([0,T],L^r(\mathbb R^2))$.
	
	Combining the regularity of $v$ and $w$ this yields the desired result. The final uniform bound on $[T_0,T]$ follows from the preceding estimates.
	
\end{proof}
\begin{corollary}\label{ImprovedLinf}
	Let $\frac{4}{3}\le p<2$ and let $u$ be the mild solution in
	$C([0,T),L^p(\mathbb R^2))$ given by Theorem~\ref{ExistenceThm}. Then
	\[
	u\in C((0,T),L^\infty(\mathbb R^2)).
	\]
	In addition, if $T<\infty$, then
	\[
	\|u(t)\|_{L^\infty(\mathbb R^2)}
	\ge
	\frac{C}{T-t}
	\]
	for all $t<T$. Moreover, with $T^*$ as in Theorem~\ref{ExistenceThm}, for every
	$\epsilon\in(0,1)$ and every $R>0$, if
	$\|u_0\|_{L^p(\mathbb R^2)}\le R$, then
	\[
	\|u(t)\|_{L^p(\mathbb R^2)\cap L^\infty(\mathbb R^2)}
	\le
	C(\epsilon,R)
	\]
	for all $t\in[\epsilon T^*(R),T^*(R)]$.
\end{corollary}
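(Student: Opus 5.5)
We prove the three assertions in order, applying Proposition~\ref{AddRegProp} iteratively with $q=p$ and then bootstrapping.

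\emph{Step 1: reaching $L^\infty$.} We take $q=p\in[4/3,2)$. If $2/p\le 1$, which cannot happen since $p<2$, the first case of Proposition~\ref{AddRegProp} would apply directly. We are therefore in the second case, and we get $u\in C((0,T'],L^{r})$ for every $p\le r<r_1:=(2/p-1)^{-1}$, for any $T'<T$. Since $1/r_1=2/p-1<1/p$, we have $r_1>p$, and the gain $1/p-1/r_1=1-1/p$ is a fixed positive amount. We then restart on a slightly later time interval $[t_1,T']$, using the semigroup property of mild solutions: a mild solution restarted at $t_1>0$ is again a mild solution with datum $u(t_1)\in L^p\cap L^{q_1}$. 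Proposition~\ref{AddRegProp} now gives integrability up to exponent $(1/p+1/q_1-1)^{-1}$. At each step $1/q_{k+1}$ is essentially $1/q_k-(1-1/p)$, so after finitely many steps $1/p+1/q_k\le1$. The first case then gives $L^\infty$ on any $[t_k,T']$. Since $t_k>0$ can be taken arbitrarily small, we obtain $u\in C((0,T),L^\infty)$. The uniform version follows the same way: if $\|u_0\|_{L^p}\le R$, Theorem~\ref{ExistenceThm} gives $\sup_{[0,T^*(R)]}\|u\|_{L^p}\le 2R$. We split $[\epsilon T^*,T^*]$ using intermediate times $\epsilon T^*/k$. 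The constants of Proposition~\ref{AddRegProp} are nondecreasing in the bound on the norms, so each step yields a bound depending only on $(\epsilon,R)$, which gives $C(\epsilon,R)$.

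\emph{Step 2: the blow-up rate.} Suppose $T<\infty$ and fix $t<T$. Write $A=\|u(t)\|_{L^\infty}$. The $L^p$ norm alone is not directly useful here. Instead we run the fixed point of Theorem~\ref{ExistenceThm} in $L^\infty\cap L^p$. To keep the problem scale-invariant, we rescale: set $v(s,y)=\lambda^2u(t+\lambda^2s,\lambda y)$ with $\lambda=A^{-1/2}$, so that $\|v(0)\|_{L^\infty}=1$.

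The difficulty is that $\|v(0)\|_{L^p}$ is not controlled, because scaling does not preserve $L^p$ norms for $p>1$. The fix is to work in $L^{p_0}\cap L^\infty$ with the smallest exponent that is admissible, and to handle the $L^p$ norm by interpolation. Mass is conserved, $\|u\|_{L^1}=M$, which is scale-invariant. By interpolation, $\|v(0)\|_{L^{4/3}}\le M^{3/4}$, a bound uniform in $t$. We then run the contraction in $X=L^\infty_s(L^{4/3}\cap L^\infty)$. Lemma~\ref{NonLinLpEst} with $(p,q,r)=(4/3,4/3,4/3)$ and $(p,q,r)=(4/3,\infty,\infty)$ controls the nonlinear term with integrable kernels $(t-s)^{-1/2}$ and $(t-s)^{-3/4}$. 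We also need the pair with $p$ close to $2$, $q=\infty$, $r=\infty$; the $L^p$ norm there is bounded by interpolation between $L^{4/3}$ and $L^\infty$.

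This yields an existence time $s_0$ for $v$ that depends only on $M$. Undoing the scaling, the solution exists on $[t,t+s_0\lambda^2]$, so $T-t\ge s_0/A$. Hence $\|u(t)\|_{L^\infty}\ge C/(T-t)$, with $C$ depending on the mass bound, or on $\|u(t)\|_{L^p}$ in pure $L^p$ data.

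\emph{Main obstacle.} The hardest step is Step 2. Unlike $L^{4/3}$, the $L^\infty$ estimate in Lemma~\ref{NonLinLpEst} requires $p<2$ paired with $q=\infty$, and one must check that the time exponents stay below $1$. In the pure $L^p$ setting without an $L^1$ bound, I would replace the mass by an $L^p$-to-$L^\infty$ interpolation. I would then use the local existence time obtained in $L^p\cap L^\infty$: because $L^\infty$ bounds give $\|\nabla\Phi_u\|_{L^\infty}\lesssim \|u\|_{L^p}^{\theta}\|u\|_{L^\infty}^{1-\theta}$, the existence time is at least $c\min\bigl(\|u(t)\|_{L^\infty}^{-1},\ldots\bigr)$. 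Combined with the $L^p$ lower bound of Theorem~\ref{ExistenceThm}, this closes the argument.
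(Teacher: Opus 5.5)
Your Step 1 matches the paper's bootstrap through Proposition~\ref{AddRegProp}. Your proof of the uniform bound on $[\epsilon T^*(R),T^*(R)]$ restarts at intermediate times instead of normalizing to $R=1$ and rescaling as the paper does, and that is fine. The gap is in Step 2.

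The corollary concerns solutions from Theorem~\ref{ExistenceThm}, whose data lie only in $L^p$. There is no conserved mass, and for $p>4/3$ there is not even an $L^{4/3}$ bound on $u(t)$, so your main argument does not apply. Even when $u_0\in L^1$, it only gives a constant $C(M)$. The statement, and its use in Theorem~\ref{th:lwp:L1} where $\gtrsim$ involves an absolute constant, requires a universal one.

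Your fallback for pure $L^p$ data does not close. A Duhamel fixed point in $L^p\cap L^\infty$ must control $\|\nabla\Phi_u\|_{L^\infty}\lesssim\|u\|_{L^p}^{p/2}\|u\|_{L^\infty}^{1-p/2}$. This gives an existence time of order $\|u(t)\|_{L^\infty}^{-(2-p)}\|u(t)\|_{L^p}^{-p}$, which is $\ll\|u(t)\|_{L^\infty}^{-1}$ whenever $\|u(t)\|_{L^p}\gg\|u(t)\|_{L^\infty}^{1-1/p}$, and nothing excludes that. The lower bound $\|u(t)\|_{L^p}\ge C(T-t)^{-(1-1/p)}$ from Theorem~\ref{ExistenceThm} points the wrong way: you would need an upper bound on $\|u(t)\|_{L^p}$. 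This is not a technicality. If $\|u(t)\|_{L^\infty}=1$ and $u(t)$ is spread over a huge ball, the field $\nabla\Phi_u$ is genuinely huge, and no argument treating the drift as a perturbation of the heat flow can produce an existence time of order one.

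The missing idea is the maximum principle. Using $-\Delta\Phi_u=u$, the equation reads $u_t=\Delta u-\nabla\Phi_u\cdot\nabla u+u^2$. Here the drift is a pure transport term, which cannot raise the supremum.

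Compare $u$ with the spatially constant supersolution $y(t)=(\|u(t_0)\|_{L^\infty}^{-1}-(t-t_0))^{-1}$ of $y'=y^2$. This gives $u(t)\le y(t)$ for $t<t_0+\|u(t_0)\|_{L^\infty}^{-1}$, with no dependence on $\|u(t_0)\|_{L^p}$ or on any mass. A negative part, if present, has nonincreasing supremum by the same principle.

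On that interval, $\frac{d}{dt}\|u\|_{L^p}^p\le C\|u\|_{L^\infty}\|u\|_{L^p}^p$ and Gronwall keep $\|u\|_{L^p}$ finite. The $L^p$ blow-up criterion of Theorem~\ref{ExistenceThm} then forces $T-t_0\ge\|u(t_0)\|_{L^\infty}^{-1}$, which is the bound with $C=1$. This is exactly the paper's argument.
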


\begin{proof}
	By Proposition~\ref{AddRegProp}, starting from
	$u\in C([0,T),L^p(\mathbb R^2))$, we obtain successively
	$u\in C((0,T),L^q(\mathbb R^2))$ for larger and larger exponents $q$.
	Indeed, if $\frac1p+\frac1q>1$,
	then Proposition~\ref{AddRegProp} gives
	$u\in C((0,T),L^{q'}(\mathbb R^2))$
	for every $q<q'<\Big(\frac1p+\frac1q-1\Big)^{-1}$.
	Since $q'>q$, after finitely many iterations we reach an exponent
	$q\ge \frac{p}{p-1}$, so that $\frac1p+\frac1q\le 1$. A final application of
	Proposition~\ref{AddRegProp} yields
	\[
	u\in C((0,T),L^\infty(\mathbb R^2)).
	\]
	
	We now prove the blow-up lower bound. Multiplying the equation by
	$u^{p-1}$ and integrating by parts, using $-\Delta\Phi_u=u$, gives
	\[
	\frac{d}{dt}\|u(t)\|_{L^p(\mathbb R^2)}^p
	\le
	C\|u(t)\|_{L^\infty(\mathbb R^2)}
	\|u(t)\|_{L^p(\mathbb R^2)}^p.
	\]
	By approximation, this estimate remains valid for the above mild solutions.
	Thus, by Gronwall's inequality and the blow-up criterion of
	Theorem~\ref{ExistenceThm}, if $T<\infty$, then
	$\lim_{t\uparrow T}\|u(t)\|_{L^\infty(\mathbb R^2)}=\infty$.
	Fix $t_0<T$. The comparison principle applied to
	$v_t=\Delta v+v^2-\nabla v\cdot\nabla\Phi_u$ yields
	\[
	u(t,x)
	\le
	\Big(\|u(t_0)\|_{L^\infty(\mathbb R^2)}^{-1}-(t-t_0)\Big)^{-1}
	\]
	for
	$t<t_0+\|u(t_0)\|_{L^\infty(\mathbb R^2)}^{-1}$.
	Since $\|u(t)\|_{L^\infty}$ blows up as $t\uparrow T$, we obtain
	\[
	T-t_0
	\ge
	\|u(t_0)\|_{L^\infty(\mathbb R^2)}^{-1},
	\]
	which proves the desired bound.
	
	It remains to prove the short-time regularization estimate. If
	$\|u_0\|_{L^p(\mathbb R^2)}\le 1$, then Theorem~\ref{ExistenceThm} gives
	\[
	\sup_{0\le t\le T^*(1)}
	\|u(t)\|_{L^p(\mathbb R^2)}
	\le 2.
	\]
	The bootstrap argument above, together with Proposition~\ref{AddRegProp}, then implies that for every
	$\epsilon\in(0,1)$ there exists $C(\epsilon)>0$ such that
	\[
	\|u(t)\|_{L^p(\mathbb R^2)\cap L^\infty(\mathbb R^2)}
	\le
	C(\epsilon)
	\]
	for all $t\in[\epsilon T^*(1),T^*(1)]$. For general data with $\|u_0\|_{L^p(\mathbb R^2)}\le R$, set
	$\lambda=R^{-\frac{p}{2p-2}}$
	and consider the rescaled solution $u_\lambda(t,x)=\lambda^2u(\lambda^2t,\lambda x)$.
	Then $\|u_\lambda(0)\|_{L^p(\mathbb R^2)}\le 1$. Applying the previous estimate to
	$u_\lambda$ and scaling back gives
	\[
	\|u(t)\|_{L^p(\mathbb R^2)\cap L^\infty(\mathbb R^2)}
	\le
	C(\epsilon,R)
	\]
	for all $t\in[\epsilon T^*(R),T^*(R)]$.
\end{proof}
\begin{proposition}\label{SobolevspaceProp}
	
	Let $\frac43\le p<2$ and let $u$ be the mild solution in
	$C([0,T),L^p(\mathbb R^2))$ given by Theorem~\ref{ExistenceThm}. Then
	\[
	u\in C((0,T),W^{k,\infty}(\mathbb R^2)\cap W^{k,p}(\mathbb R^2))
	\]
	for every $k\in\mathbb N$. \newline Moreover, with $T^*$ as in Theorem~\ref{ExistenceThm}, for every
	$\epsilon\in(0,1)$ and every $R>0$, if
	$\|u_0\|_{L^p(\mathbb R^2)}\le R$,
	then
	\[
	\|u(t)\|_{W^{k,\infty}(\mathbb R^2)\cap W^{k,p}(\mathbb R^2)}
	\le
	C(\epsilon,R,k)
	\]
	for all
	\[
	t\in[\epsilon T^*(R),T^*(R)].
	\]
	
\end{proposition}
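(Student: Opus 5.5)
We argue by induction on $k$, using the Duhamel formula \eqref{id:mild-solution} restarted at an intermediate time, and put the derivatives on the heat kernel. The case $k=0$ is Corollary \ref{ImprovedLinf}, which gives $u\in C((0,T),L^p\cap L^\infty)$ together with the quantitative bound $\|u(t)\|_{L^p\cap L^\infty}\le C(\epsilon,R)$ on $[\epsilon T^*(R),T^*(R)]$. We take the following two-step strategy, and we treat the quantitative statement first, since the qualitative one follows from it by restarting the flow at any $t_0\in(0,T)$ with datum $u(t_0)\in L^p$.

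\textbf{Step 1: scaling reduction and one derivative.} By the scaling $u_\lambda(t,x)=\lambda^2u(\lambda^2t,\lambda x)$ used in Corollary \ref{ImprovedLinf}, it suffices to treat $R=1$, so $T^*=T^*(1)$ is a fixed constant. Fix $\epsilon$ and set $t_1=\epsilon T^*/2$. On $[t_1,T^*]$ we have $\|u(t)\|_{L^p\cap L^\infty}\le C$. Since $p<2<p'$, interpolation gives $u(t)\in L^{a}$ for all $a\in[p,\infty]$, and hence, by Hardy--Littlewood--Sobolev (bounding the near part by $L^\infty$ and the far part by $L^p$), $\|\nabla\Phi_{u(t)}\|_{L^\infty}\le C$. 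Consequently $F(s)=u(s)\nabla\Phi_{u(s)}$ satisfies
$$
\|F(s)\|_{L^p\cap L^\infty}\le C \quad \text{on } [t_1,T^*].
$$
For $t\in[\epsilon T^*,T^*]$ we write
$$
u(t)=G(t-t_1)*u(t_1)-\int_{t_1}^t\nabla G(t-s)*\cdot F(s)\,ds .
$$
We apply one derivative to each term. For the linear term, $\|\nabla G(t-t_1)*u(t_1)\|_{L^p\cap L^\infty}\lesssim (t-t_1)^{-1/2}$, which is bounded since $t-t_1\ge\epsilon T^*/2$. For the Duhamel term, both derivatives cannot be placed on $G$, because the resulting factor $(t-s)^{-1}$ is not integrable. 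The main obstacle is this loss of one derivative, and we handle it with a two-stage bootstrap. First we show $u\in C^{\alpha}$, or $u\in W^{\theta,q}$ for some $\theta<1$, using $\|\nabla G(t-s)\|_{L^1}\sim(t-s)^{-1/2}$ and fractional smoothing $\||\nabla|^{\theta}\nabla G\|_{L^1}\sim (t-s)^{-(1+\theta)/2}$, which is integrable for $\theta<1$. With this fractional regularity of $u$, the field $\nabla\Phi_u$ is one derivative smoother (the Riesz transform $\nabla^2\Phi_u$ maps $C^\alpha$ to $C^\alpha$ locally and $L^p\to L^p$). Then $F$ gains fractional regularity $\theta$, and $\nabla^2 G(t-s)*F$ is controlled by $(t-s)^{-1+\theta/2}\|F\|_{W^{\theta,\cdot}}$, which is integrable. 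Each application of this argument gains $\theta$ derivatives at the cost of halving the time window. After finitely many steps this yields
$$
\|\nabla u(t)\|_{L^p\cap L^\infty}\le C(\epsilon)
$$
on $[\epsilon T^*,T^*]$.

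\textbf{Step 2: induction on $k$ and continuity.} Assume the bound in $W^{k,p}\cap W^{k,\infty}$ holds on $[\epsilon' T^*,T^*]$ for every $\epsilon'$. Then Leibniz together with $\nabla^{j+1}\Phi_u=\nabla^{j}(\nabla\Phi_u)$, where $\nabla^2\Phi_u$ is a Calder\'on--Zygmund operator applied to $u$ (bounded on $L^p$, and on $L^\infty$ after replacing by $C^\alpha$/BMO with the lower-order terms), gives $F\in W^{k,p}\cap W^{k,\infty}$ up to a small H\"older loss. Repeating Step 1 with $\nabla^k$ applied to the restarted Duhamel formula yields the bound for $k+1$ on $[\epsilon T^*,T^*]$, shrinking the window $\epsilon'=\epsilon/2^{k}$ at each stage. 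Continuity in time with values in $W^{k,p}\cap W^{k,\infty}$ follows from the same formula: the difference $u(t)-u(t')$ is split into $(G(t-t')-\mathrm{Id})$ acting on smooth data plus a Duhamel integral over $[t',t]$ of size $O(|t-t'|^{\theta/2})$. This completes the plan. The expected hard point is the $L^\infty$ endpoint of Calder\'on--Zygmund in Step 1, which we circumvent by working in H\"older spaces $C^{k,\alpha}$, as $C^{k,\alpha}\cap W^{k,p}$ embeds into $W^{k,\infty}$.
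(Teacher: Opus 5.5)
Your proposal is correct, but it follows a different route from the paper.

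\textbf{The paper's route.} The paper never puts two derivatives on the heat kernel. After restarting at $t_0$, it differentiates the Duhamel formula with one derivative on $\nabla G(t-s)$ and the other on the nonlinearity, $\nabla(u\nabla\Phi_u)=\nabla u\,\nabla\Phi_u+u\,\nabla\Phi_{\nabla u}$. This is linear in $\nabla u$ and directly covered by Lemma~\ref{NonLinLpEst}: one applies HLS to $\nabla\Phi_{\nabla u}$ with $\nabla u\in L^{p_1}$, $p_1<2$, after interpolating between $L^p$ and $L^\infty$. All time kernels are then $(t-s)^{-\gamma}$ with $\gamma<1$. The weighted norm $\sup_{0<t\le T_0}\sqrt t\,\|\nabla u(t_0+t)\|_{L^p\cap L^\infty}$ absorbs the $t^{-1/2}$ singularity of the linear term $\nabla G(t)*u(t_0)$, since $\int_0^t(t-s)^{-\gamma}s^{-1/2}\,ds\lesssim t^{1/2-\gamma}$. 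A short-time fixed point then closes, and uniqueness identifies the result with $u$. Higher $k$ is identical, because the equation for $\nabla^{k+1}u$ is linear in the unknown with lower-order sources already controlled.

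\textbf{What this means for your obstacles.} The loss of derivative you single out as the main obstacle does not really arise. Nor does the $L^\infty$ endpoint of Calder\'on--Zygmund: the top-order term $\nabla^{k+1}\Phi_u=\nabla\Phi_{\nabla^k u}$ is bounded in $L^\infty$ by the same near/far splitting you already use in Step 1, once $\nabla^k u\in L^p\cap L^\infty$.

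\textbf{Your route still works.} The fractional bootstrap is sound:
\begin{itemize}
\item The bound $\|\nabla G(\tau)(\cdot+h)-\nabla G(\tau)\|_{L^1}\lesssim|h|^\theta\tau^{-(1+\theta)/2}$ gives $\|u(\cdot+h)-u\|_{L^p\cap L^\infty}\lesssim|h|^\theta$ from $F\in L^p\cap L^\infty$.
\item The product $F=u\nabla\Phi_u$ inherits this difference bound. No Riesz transform is needed here, since $\nabla\Phi_u$ is log-Lipschitz for $u\in L^p\cap L^\infty$.
\item The mean-zero property of $\nabla^2G$ then gives the integrable factor $(t-s)^{-1+\theta/2}$.
\end{itemize}

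\textbf{Trade-offs.} Your route is a pure a priori estimate on the already constructed solution, with no auxiliary fixed point or identification step, and it yields H\"older continuity of all derivatives as a bonus. The cost is working with H\"older/Besov-type difference norms, with an exponent that degrades at each step of the induction. The paper's route stays entirely within Lebesgue spaces and reuses Lemma~\ref{NonLinLpEst} verbatim, which makes the induction in $k$ mechanical.
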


\begin{proof}
	
	The case $k=0$ is precisely Corollary~\ref{ImprovedLinf}. Fix
	$t_0\in(0,T)$. By Corollary~\ref{ImprovedLinf},
	$u(t_0)\in L^p(\mathbb R^2)\cap L^\infty(\mathbb R^2)$,
	and
	\[
	\sup_{t_0\le t\le T}
	\|u(t)\|_{L^p(\mathbb R^2)\cap L^\infty(\mathbb R^2)}
	<\infty.
	\]
	
	Restarting the equation at time $t_0$, differentiating the Duhamel formula and applying Lemma~\ref{NonLinLpEst}
	to the terms
	$\nabla u\,\nabla\Phi_u,$ and $u\,\nabla\Phi_{\nabla u}$,
	one obtains
	\[
	\sup_{0<t\le T_0}
	\sqrt t\,
	\|\nabla u(t+t_0)\|_{L^p(\mathbb R^2)\cap L^\infty(\mathbb R^2)}
	<\infty
	\]
	for some $T_0=T_0(\|u(t_0)\|_{L^p\cap L^\infty})>0$ (performing a fixed point argument in the space corresponding to the above norm, exactly as in the proof of Theorem \ref{ExistenceThm}).
	By uniqueness, this estimate applies to the original solution and therefore
	$u\in
	C((0,T),W^{1,p}(\mathbb R^2)\cap W^{1,\infty}(\mathbb R^2))$. Repeating the argument for higher derivatives yields inductively
	\[
	u\in
	C((0,T),W^{k,p}(\mathbb R^2)\cap W^{k,\infty}(\mathbb R^2))
	\]
	for every $k\in\mathbb N$.
	
	Finally, the quantitative estimate on
	$[\epsilon T^*(R),T^*(R)]$ follows by combining the uniform
	$L^p\cap L^\infty$ bounds of Corollary~\ref{ImprovedLinf}
	with the above bootstrap argument. The constants depend only on
	$\epsilon$, $R$ and $k$.
	
\end{proof}
	We observe that as a consequence the Proposition \ref{SobolevspaceProp} the solutions given by Theorem \ref{ExistenceThm} satisfy
	\begin{align*}
		u(t)=G(t)u_{0}+\int_{0}^{t}G(t-s)\nabla \cdot (u\Phi_{u})(s)ds.
	\end{align*}

\begin{corollary}\label{cor:Cinfty-regularity}
	
	Let $\frac43\le p<2$ and let $u$ be the mild solution in
	$C([0,T),L^p(\mathbb R^2))$ given by Theorem~\ref{ExistenceThm}. Then
	\[
	u\in C^\infty((0,T)\times\mathbb R^2).
	\]
	Moreover, for every $\epsilon\in(0,1)$ and every $k\in\mathbb N$, recalling
	$T^*$ from Theorem~\ref{ExistenceThm}, if $\|u_0\|_{L^p(\mathbb R^2)}
	\le R$,
	then
	\[
	\|\partial_t^\alpha \nabla^{\alpha'}u(t)\|_{L^p(\mathbb R^2)\cap L^\infty(\mathbb R^2)}
	\le
	C(k,\epsilon,R)
	\]
	for every $\alpha+\alpha'\le k$ and every
	\[
	t\in[\epsilon T^*(R),T^*(R)].
	\]
	
\end{corollary}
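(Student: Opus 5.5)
The plan is to obtain time regularity from the space regularity of Proposition~\ref{SobolevspaceProp} by using the equation itself, and then to deduce joint smoothness. Fix $t_0\in(0,T)$.

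\textbf{Step 1: bounds on the Poisson field.} Proposition~\ref{SobolevspaceProp} gives $u\in C((0,T),W^{k,\infty}\cap W^{k,p})$ for all $k$, with uniform bounds on compact subintervals of $(0,T)$. First I would show that $\nabla\Phi_u$ and all its spatial derivatives are bounded on such subintervals. Since $\nabla^{\beta}\nabla\Phi_u=\nabla\Phi_{\nabla^\beta u}$, it suffices to bound $\nabla\Phi_f$ for $f\in L^p\cap L^\infty$. Split the kernel $x/|x|^2$ into its part on $|x|<1$, which lies in $L^1$, and its part on $|x|\geq 1$, which lies in $L^{p'}$ because $p<2$ gives $p'>2$. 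Young's inequality then yields
\[
\|\nabla\Phi_f\|_{L^\infty}\lesssim \|f\|_{L^\infty}+\|f\|_{L^p}.
\]
The Hardy--Littlewood--Sobolev estimate used in Lemma~\ref{NonLinLpEst} gives $\nabla\Phi_f\in L^a$ with $\frac1a=\frac1p-\frac12$. Consequently the right-hand side $\Delta u-\nabla\cdot(u\nabla\Phi_u)=\Delta u-\nabla u\cdot\nabla\Phi_u+u^2$ lies in $C((0,T),W^{k,\infty}\cap W^{k,p})$ for every $k$. Here continuity in time comes from the bilinear estimates, which show that $u\mapsto\nabla\Phi_u$ is continuous from $W^{k,p}\cap W^{k,\infty}$ into $W^{k,\infty}\cap W^{k,a}$.

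\textbf{Step 2: $u$ is $C^1$ in time.} Next I would show that $u$ is $C^1$ in time with values in $W^{k,p}\cap W^{k,\infty}$ and that $\partial_tu$ equals the right-hand side above. To do this, I would use the Duhamel formula restarted at $t_0$, written in the form noted after Proposition~\ref{SobolevspaceProp}:
\[
u(t)=G(t-t_0)u(t_0)+\int_{t_0}^tG(t-s)N(s)\,ds,\qquad N=-\nabla\cdot(u\nabla\Phi_u).
\]
Since $u(t_0)\in W^{k+2,q}$ and $N\in C([t_0,T'],W^{k,q})$ for $q\in\{p,\infty\}$, the standard semigroup argument applies. When $q=\infty$, the heat semigroup is not strongly continuous on $L^\infty$, but it is strongly continuous on the uniformly continuous bounded functions. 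Since all derivatives are bounded, $u$ and $N$ lie in that space, so the argument goes through. This gives differentiability, with $\partial_t u=\Delta u+N$.

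\textbf{Step 3: induction on the number of time derivatives.} I would then induct on $\alpha$. Suppose $\partial_t^j u\in C((0,T),W^{k,p}\cap W^{k,\infty})$ for all $j\le\alpha$ and all $k$. Differentiate the identity $\partial_tu=\Delta u-\nabla u\cdot\nabla\Phi_u+u^2$ $\alpha$ times, using the Leibniz rule and the linearity $\partial_t\nabla\Phi_u=\nabla\Phi_{\partial_tu}$. The latter is justified by the estimate of Step 1, since $\Phi$ is a bounded linear map from $L^p\cap L^\infty$ into $L^\infty$. The right-hand side is then a finite sum of bilinear terms in $\partial_t^{j}\nabla^{\beta}u$ with $j\le\alpha$, each continuous in time. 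This gives $\partial_t^{\alpha+1}u\in C((0,T),W^{k,p}\cap W^{k,\infty})$.

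\textbf{Step 4: joint smoothness.} Joint $C^\infty$ regularity in $(t,x)$ follows because all mixed derivatives $\partial_t^\alpha\nabla^{\alpha'}u$ exist and are continuous in $t$ with values in $W^{1,\infty}$, hence jointly continuous. The same argument applied to $\nabla\Phi_u$ gives its smoothness as well.

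\textbf{Step 5: quantitative bounds.} For the estimate on $[\epsilon T^*(R),T^*(R)]$, I would track the constants. Proposition~\ref{SobolevspaceProp}, applied with $\epsilon/2$, gives spatial bounds $C(\epsilon,R,k)$ on $[\tfrac{\epsilon}{2}T^*(R),T^*(R)]$. Each time derivative is expressed algebraically through spatial derivatives of lower-order time derivatives, so each step of the induction produces a constant depending only on $(k,\epsilon,R)$. Alternatively, one could rescale as in Corollary~\ref{ImprovedLinf} to reduce to $R=1$.

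The main obstacle is Step 2, namely justifying the time differentiability rigorously at the $L^\infty$ level. I would handle it as described: working with uniformly continuous functions in $L^\infty$, or using the $W^{k,p}$ statement together with the embedding $W^{k+2,p}\subset W^{k,\infty}$.
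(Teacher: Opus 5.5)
Your proposal is correct and follows the same route as the paper. You take spatial regularity and bounds from Proposition~\ref{SobolevspaceProp}, use the equation to express each time derivative through spatial derivatives of $u$ and $\nabla\Phi_u$, induct on the number of time derivatives, and track constants on $[\epsilon T^*(R),T^*(R)]$. You are more careful than the paper on two points it passes over quickly: the strong time-differentiability of the mild solution (your semigroup argument in Step~2), and the form of the Poisson-field estimate. Your kernel-splitting bound $\|\nabla\Phi_f\|_{L^\infty}\lesssim\|f\|_{L^p\cap L^\infty}$ is what the argument actually needs, since $\nabla\Phi_u$ is in general not in $L^p$ for $p<2$ (it decays like $|x|^{-1}$ when $u$ has nonzero mass), so the paper's version with $\nabla\Phi_u$ in $L^p\cap L^\infty$ is not quite right as written.
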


\begin{proof}
	
	By Proposition~\ref{SobolevspaceProp},
	\[
	u\in C((0,T),W^{m,p}(\mathbb R^2)\cap W^{m,\infty}(\mathbb R^2))
	\]
	for every $m\in\mathbb N$. In particular, Sobolev embedding implies that
	$u$ is smooth with respect to the spatial variables.
	
	Next, using \eqref{KS}
	and the estimate
	\[
	\|\nabla^{m+1}\Phi_u\|_{L^p(\mathbb R^2)\cap L^\infty(\mathbb R^2)}
	\le
	C
	\|\nabla^m u\|_{L^p(\mathbb R^2)\cap L^\infty(\mathbb R^2)},
	\]
	which follows from the representation formula \eqref{gradient} for $\Phi_u$, we obtain
	\[
	\|u_t\|_{W^{m,p}(\mathbb R^2)\cap W^{m,\infty}(\mathbb R^2)}
	\le
	C_m
	\Big(
	1+
	\|u\|_{W^{m+2,p}(\mathbb R^2)\cap W^{m+2,\infty}(\mathbb R^2)}^2
	\Big).
	\]
	Hence all first-order time derivatives belong to
	$W^{m,p}\cap W^{m,\infty}$.
	
	Differentiating the equation repeatedly in time shows inductively that every
	$\partial_t^\alpha u$ can be expressed as a finite combination of products of
	spatial derivatives of $u$ and $\Phi_u$. Since all spatial derivatives are
	already controlled by Proposition~\ref{SobolevspaceProp}, it follows that
	\[
	u\in C^\infty((0,T)\times\mathbb R^2).
	\]
	
	Finally, the quantitative estimate on
	$[\epsilon T^*(R),T^*(R)]$ follows immediately from the uniform bounds of
	Proposition~\ref{SobolevspaceProp} and the above induction argument.
	
\end{proof}

\subsection{Local well-posedness in $L^1(\mathbb R^2)$}\label{subsec:lwp-L1}

In this subsection we establish local well-posedness in the critical space
$L^1(\mathbb R^2)$. The argument follows the classical approach of
Brezis and Cazenave \cite{BC}.

\begin{lemma}\label{BrezisCazenave}
	For every $f\in L^1(\mathbb R^2)$,
	\[
	\lim_{t\downarrow0}
	t^{1/4}\|G(t)\ast f\|_{L^{4/3}(\mathbb R^2)}
	=0.
	\]
\end{lemma}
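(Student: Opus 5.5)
The plan is the standard density argument. The first step is a uniform bound. By Young's inequality with the heat kernel, $\|G(t)\ast f\|_{L^{4/3}}\le \|G(t)\|_{L^{4/3}}\|f\|_{L^1}$. Since $G(t,x)=t^{-1}G(1,x/\sqrt t)$, we get $\|G(t)\|_{L^{4/3}}=t^{-1}\,t^{3/4}\|G(1)\|_{L^{4/3}}=C t^{-1/4}$. Hence
\[
t^{1/4}\|G(t)\ast f\|_{L^{4/3}}\le C\|f\|_{L^1}
\]
for all $t>0$ and all $f\in L^1(\mathbb R^2)$. Here we understand $G$ with the correct Gaussian sign $e^{-|x|^2/4t}$.

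The second step treats nice data. For $g\in L^1\cap L^{4/3}(\mathbb R^2)$, for instance $g\in C_c^\infty$, Young's inequality with $\|G(t)\|_{L^1}=1$ gives $\|G(t)\ast g\|_{L^{4/3}}\le \|g\|_{L^{4/3}}$. Therefore $t^{1/4}\|G(t)\ast g\|_{L^{4/3}}\le t^{1/4}\|g\|_{L^{4/3}}\to 0$ as $t\downarrow 0$.

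The last step combines the two. Given $\varepsilon>0$, choose $g\in C_c^\infty$ with $\|f-g\|_{L^1}\le\varepsilon$, which is possible by density in $L^1$. Then
\[
t^{1/4}\|G(t)\ast f\|_{L^{4/3}}\le C\varepsilon+t^{1/4}\|g\|_{L^{4/3}}.
\]
Taking $\limsup_{t\downarrow0}$ gives a bound of $C\varepsilon$, and since $\varepsilon$ is arbitrary the limit is zero.

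No step is a real obstacle. The only point requiring care is the scaling computation in the first step: it shows that the weight $t^{1/4}$ exactly compensates the $L^1\to L^{4/3}$ smoothing rate, so the operators $t^{1/4}G(t)\ast$ are uniformly bounded from $L^1$ to $L^{4/3}$. This uniform bound is what makes the density argument work. The limit is not uniform on bounded sets of $L^1$, which is consistent with the remark about measure data in \cite{BM}.
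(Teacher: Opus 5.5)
Your proposal is correct and is essentially the paper's own argument. Both combine the uniform bound $\|G(t)\ast g\|_{L^{4/3}}\le Ct^{-1/4}\|g\|_{L^1}$ from Young's inequality, the trivial estimate $t^{1/4}\|G(t)\ast g\|_{L^{4/3}}\le t^{1/4}\|g\|_{L^{4/3}}$ for smooth data, and density in $L^1$. You also correctly use the Gaussian with $e^{-|x|^2/4t}$, which fixes the sign typo in the paper's definition of $G$.
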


\begin{proof}
	The claim is immediate for $f\in\mathcal S(\mathbb R^2)$ since
	\[
	t^{1/4}\|G(t)\ast f\|_{L^{4/3}(\mathbb R^2)}
	\le
	t^{1/4}\|f\|_{L^{4/3}(\mathbb R^2)}.
	\]
	The general case follows by approximation in $L^1(\mathbb R^2)$ and the estimate
	\[
	\|G(t)\ast g\|_{L^{4/3}(\mathbb R^2)}
	\le
	Ct^{-1/4}\|g\|_{L^1(\mathbb R^2)},
	\]
	which is a consequence of Young's inequality.
\end{proof}

\begin{theorem}\label{ExistenceL1}
	
	For every $u_0\in L^1(\mathbb R^2)$ there exists $T_0>0$ and a unique mild solution in the sense of Definition~\ref{def:solution}
	of \eqref{KS} on $[0,T_{0}]$. Moreover, for every $\delta>0$ there exists $\varepsilon>0$ such that if
	\[
	\|v_0-u_0\|_{L^1(\mathbb R^2)}
	\le
	\varepsilon,
	\]
	then the corresponding solution $v$ exists on $[0,T_0]$ and satisfies
	\[
	\sup_{0\le t\le T_0}
	\|u(t)-v(t)\|_{L^1(\mathbb R^2)}
	\le
	\delta.
	\]
	
\end{theorem}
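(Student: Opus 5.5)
We run a Banach fixed point in the Kato-type space built on the $L^{4/3}$ weight of Definition~\ref{def:solution}, using Lemma~\ref{BrezisCazenave} to make the linear part small. For $T>0$ we set
\[
\|v\|_{Y_T}:=\sup_{0<t\le T} t^{1/4}\|v(t)\|_{L^{4/3}(\mathbb R^2)},
\]
and we look for a fixed point of
\[
F(v)(t)=G(t)*u_0-\int_0^t\nabla G(t-s)*\cdot\bigl(v(s)\nabla\Phi_{v(s)}\bigr)ds
\]
in a small ball of $Y_T$.

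\textbf{Bilinear estimate.} We apply Lemma~\ref{NonLinLpEst} with $p=q=r=4/3$. The exponent of $t-s$ is then $\tfrac34+\tfrac34-\tfrac34=\tfrac34$, so
\[
t^{1/4}\Bigl\|\int_0^t\nabla G(t-s)*\cdot(v\nabla\Phi_w)ds\Bigr\|_{L^{4/3}}\le C\,t^{1/4}\int_0^t (t-s)^{-3/4}s^{-1/2}ds\,\|v\|_{Y_T}\|w\|_{Y_T}=C B(\tfrac14,\tfrac12)\|v\|_{Y_T}\|w\|_{Y_T}.
\]
The constant is independent of $T$, which reflects the scaling criticality. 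Hence $\|F(v)\|_{Y_T}\le \eta(T)+C_0\|v\|_{Y_T}^2$, where $\eta(T)=\sup_{t\le T}t^{1/4}\|G(t)*u_0\|_{L^{4/3}}$. The difference $F(v)-F(w)$ satisfies the analogous bound with factor $C_0(\|v\|_{Y_T}+\|w\|_{Y_T})$.

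\textbf{Existence and uniqueness.} By Lemma~\ref{BrezisCazenave}, $\eta(T)\to0$ as $T\to0$. We choose $T_0$ with $\eta(T_0)\le \epsilon_0$, where $4C_0\epsilon_0<1$. Then $F$ is a contraction on the ball of radius $2\epsilon_0$ in $Y_{T_0}$, restricted to functions with $t^{1/4}\|v(t)\|_{L^{4/3}}\to0$ as $t\to0$; this class is preserved because the bilinear term inherits the vanishing limit from its inputs. Uniqueness among all mild solutions in the sense of Definition~\ref{def:solution} is obtained as follows. For any such $u'$, the requirement $t^{1/4}\|u'(t)\|_{L^{4/3}}\to0$ gives $\|u'\|_{Y_\tau}\le 2\epsilon_0$ for small $\tau$. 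Contraction then gives $u=u'$ on $[0,\tau]$. After time $\tau$ both solutions lie in $C([\tau,\cdot],L^{4/3})$, and Theorem~\ref{ExistenceThm} with $p=4/3$ gives uniqueness there.

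\textbf{$L^1$ continuity and dependence on data.} We apply Lemma~\ref{NonLinLpEst} with $p=q=4/3$ and $r=1$, for which $\tfrac1r=1=\tfrac34+\tfrac34-\tfrac12$ is allowed. This gives the Duhamel term in $L^1$ with the kernel $(t-s)^{-1/2}s^{-1/2}$, which is integrable. Therefore $u\in C([0,T_0],L^1)$, with $u(t)\to u_0$ in $L^1$ because the Duhamel term is $O(\|u\|_{Y_t}^2)\to0$. For a second datum $v_0$ we split $G(t)*v_0=G(t)*u_0+G(t)*(v_0-u_0)$. The second piece is bounded in $Y_{T_0}$ by $C\varepsilon$ via Young's inequality, uniformly in $T_0$. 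Hence $\eta_{v_0}(T_0)\le\epsilon_0+C\varepsilon\le 2\epsilon_0$, and the same $T_0$ works after slightly adjusting $\epsilon_0$. Subtracting the two fixed point equations gives
\[
\|u-v\|_{Y_{T_0}}\le C\varepsilon+4C_0\epsilon_0\|u-v\|_{Y_{T_0}},
\]
so $\|u-v\|_{Y_{T_0}}\lesssim\varepsilon$. Combining this with the $L^1$ Duhamel bound and $\|G(t)*(u_0-v_0)\|_{L^1}\le\varepsilon$, we get $\sup_{t\le T_0}\|u(t)-v(t)\|_{L^1}\lesssim\varepsilon\le\delta$.

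\textbf{Main obstacle.} The delicate point is that the smallness parameter $\eta(T)$ is not quantified by $\|u_0\|_{L^1}$ alone. It depends on the profile of $u_0$, so $T_0$ depends on $u_0$ itself. The perturbation argument must therefore keep $T_0$ fixed while controlling the perturbation through the scale-invariant $L^1\to Y$ bound on $G(t)*(v_0-u_0)$.
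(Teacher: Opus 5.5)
Your proposal is correct and follows essentially the same Brezis--Cazenave/Weissler argument as the paper. Both run a contraction in the Kato-type norm $\sup_t t^{1/4}\|u(t)\|_{L^{4/3}}$, with Lemma~\ref{BrezisCazenave} making the free evolution small and Lemma~\ref{NonLinLpEst} at $p=q=r=4/3$ giving a $T$-independent bilinear bound, and both get continuous dependence by applying the same contraction to the difference. The only variations are minor: the paper builds the $L^1$ norm into its fixed-point space $X$, whereas you contract in the weighted $L^{4/3}$ norm alone and recover $L^1$ control afterwards via the $r=1$ case of Lemma~\ref{NonLinLpEst}; you also write out the uniqueness and fixed-$T_0$ perturbation steps in more detail than the paper does.
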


\begin{proof}
	
	As the reasoning is classical, we shall refer to \cite{BC} for the details. Let
	\[
	X=
	C([0,T],L^1(\mathbb R^2))
	\cap
	\Big\{
	u:
	t^{1/4}u(t)\in C([0,T],L^{4/3}(\mathbb R^2)),
	\;
	\lim_{t\downarrow0}
	t^{1/4}\|u(t)\|_{L^{4/3}(\mathbb R^2)}=0
	\Big\},
	\]
	endowed with the norm
	$\|u\|_X
	=
	\sup_{0\le t\le T}
	\|u(t)\|_{L^1(\mathbb R^2)}
	+
	\sup_{0\le t\le T}
	t^{1/4}\|u(t)\|_{L^{4/3}(\mathbb R^2)}$.
    
	Consider the map
	\[
	F(v)(t)
	=
	G(t)u_0
	-
	\int_0^t
	\nabla G(t-s)\ast\cdot
	\bigl(v(s)\nabla\Phi_{v(s)}\bigr)\,ds.
	\]
	For $\nu>0$ let
	\[
	E=
	\Bigl\{
	u\in X:
	\sup_{0\le t\le T}
	\|u(t)\|_{L^1(\mathbb R^2)}
	\le
	2\|u_0\|_{L^1(\mathbb R^2)},
	\quad
	\sup_{0\le t\le T}
	t^{1/4}\|u(t)\|_{L^{4/3}(\mathbb R^2)}
	\le
	\nu
	\Bigr\}.
	\]
	By Lemma~\ref{BrezisCazenave}, choosing $T$ sufficiently small gives
	\[
	\sup_{0\le t\le T}
	t^{1/4}
	\|G(t)u_0\|_{L^{4/3}(\mathbb R^2)}
	\le
	\frac{\nu}{2}.
	\]
	Moreover, applying Lemma~\ref{NonLinLpEst} with $p=q=r=4/3$ yields
	\[
	\Big\|
	\int_0^t
	\nabla G(t-s)\ast\cdot
	(v\nabla\Phi_v)(s)\,ds
	\Big\|_{L^1(\mathbb R^2)}
	+
	t^{1/4}
	\Big\|
	\int_0^t
	\nabla G(t-s)\ast\cdot
	(v\nabla\Phi_v)(s)\,ds
	\Big\|_{L^{4/3}(\mathbb R^2)}
	\le
	C\nu^2.
	\]
	Hence $F$ maps $E$ into itself provided $\nu$ and then $T$ are sufficiently small.
	
	An analogous estimate
	shows that $F$ is a contraction on $E$. Therefore Banach's fixed-point theorem yields a unique solution
	$u\in E$.
	
	The continuous dependence statement is obtained by applying the same contraction argument to the difference
	$w=v-u$.
	Choosing $\varepsilon$ sufficiently small, one obtains
	\[
	\|w\|_X\le \delta,
	\]
	which implies the desired estimate.
	
\end{proof}

We can now finish the proof of the main result of this section, by gathering all the previous intermediate results.

\begin{proof}[Proof of Theorem \ref{th:lwp:L1}]
	
	The existence, uniqueness, and continuous dependence in $L^1(\mathbb R^2)$ follow from Theorem~\ref{ExistenceL1}, together with the standard continuation argument yielding a unique maximal mild solution on a maximal interval $[0,T)$.
	
	The conservation of mass follows from the identity
	\[
	\frac{d}{dt}\int_{\mathbb R^2}u(t,x)\,dx=0,
	\]
	which holds for smooth and localized solutions, and then by a standard approximation argument.
	
	Since
	$u(t_0)\in L^{4/3}(\mathbb R^2)$
	for every $t_0>0$, uniqueness implies that the restriction of $u$ to $[t_0,T)$ coincides with the mild solution provided by Theorem~\ref{ExistenceThm} with initial datum $u(t_0)$. Proposition~\ref{SobolevspaceProp} therefore yields
	$u\in C((0,T),W^{k,4/3}(\mathbb R^2)\cap W^{k,\infty}(\mathbb R^2))$
	for every $k\in\mathbb N$. Moreover, a straightforward adaptation of the proof of Proposition~\ref{SobolevspaceProp}, using Lemma~\ref{NonLinLpEst} with $p=q=4/3$ and $r=1$, gives
	$u\in C((0,T),W^{k,1}(\mathbb R^2))$
	for every $k\in\mathbb N$. In particular,
	$u$, $\nabla\Phi_u\in C^\infty((0,T)\times\mathbb R^2)$,
	and $u$ is a classical solution of \eqref{KS} on $(0,T)$.
	
	Finally, the blow-up criterion follows from Corollary~\ref{ImprovedLinf}.
	
\end{proof}

\section{Local well-posedness in $L^1(\langle x \rangle^2 dx)$}\label{sec:lwp-L1x2}

\subsection{Local well-posedness in $L^1(\langle x \rangle^2 dx)$}

The main classification result of this article, Theorem \ref{thm:classification}, concerns initial data in $L^1(\langle x \rangle^2 dx)$. The result below states that the Keller-Segel system \eqref{KS} is well-posed in this space, where it preserves mass, center of mass and second momentum, and where by parabolic regularization the entropy is instantaneously well-defined and dissipated. As we did not see it clearly written down in the literature despite the tremendous use of this function space, we thought it deserved a statement and proof in the present article.
For the sake of completeness, we recall that the free energy is well-defined under the following condition.

\begin{theorem} \label{thm:lwp-L1x2}
	
	For any nonnegative $u_0\in L^1(\langle x \rangle^2 dx)$, let $u$ be the solution to the Keller-Segel system \eqref{KS} provided by Theorem \ref{th:lwp:L1}. Then the following properties hold
	
	\begin{itemize}
		\item \emph{Continuity of the flow in $L^1(\langle x \rangle^2dx)$.} We have $u\in C([0,T(u_0)),L^1(\langle x \rangle^2 dx))$.
		\item \emph{Instantaneous boundedness and decrease of the free energy functional}. For all $t\in (0,T(u_0))$ we have $u(t)|\log u(t)|,u(t)|\Phi_{u(t)}|\in L^1(\mathbb R^2)$, so that the free energy $\mathcal F[u](t)$ \eqref{freeenergy} is well-defined. We also have that $u|\nabla \log u-\nabla \Phi_u|^2\in L^1_{loc}((0,T),L^1(\mathbb R^2))$ and the identity \eqref{freeenergy-2} holds true for all $0<t<t'<T(u_0)$.
		\item \emph{First and second momenta}. There hold the identities \eqref{first-momentum} and \eqref{second-momentum}.
	\end{itemize}

\end{theorem}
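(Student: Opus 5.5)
The plan is to prove the three items in order, using weighted a priori estimates on the smooth solution for positive times (Theorem \ref{th:lwp:L1}) together with a truncation argument near $t=0$, where only $L^1$ continuity is available.

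\emph{Second momentum propagation and continuity.} Fix a smooth cutoff $\chi_R(x)=R^2\chi(x/R)$ approximating $|x|^2$, with $|\nabla\chi_R|\lesssim |x|$, $|\nabla^2\chi_R|\lesssim 1$, and $\chi_R\le |x|^2$. For $0<s<t<T$ the solution is classical and in $W^{k,1}$, so we may test the equation against $\chi_R$. Symmetrizing the drift term gives
\[
\int u\nabla\Phi_u\cdot\nabla\chi_R=-\frac1{4\pi}\iint \frac{(x-y)\cdot(\nabla\chi_R(x)-\nabla\chi_R(y))}{|x-y|^2}u(x)u(y)\,dx\,dy .
\]
The kernel is bounded by $\|\nabla^2\chi\|_\infty$ uniformly in $R$. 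Hence
\[
\Bigl|\frac{d}{dt}\int u\chi_R\Bigr|\lesssim \|u\|_{L^1}+\|u\|_{L^1}^2=M+M^2
\]
uniformly in $R$. Integrating on $[s,t]$ and letting $s\downarrow0$ uses $L^1$ continuity, and since $\chi_R$ is bounded for fixed $R$ we get $\int u(t)\chi_R\le \int u_0|x|^2+C(M)t$. Letting $R\to\infty$ by monotone convergence shows $u(t)\in L^1(\langle x\rangle^2dx)$, with a locally uniform bound. For exact identities we pass to the limit $R\to\infty$ in the differentiated identity by dominated convergence. For $\chi_R=|x|^2$ the kernel equals $2$ exactly, which gives $\frac{d}{dt}\int |x|^2u=4M-\frac{M^2}{2\pi}$. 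The same computation with $x_i$ gives conservation of $\int xu$, and together these yield \eqref{first-momentum} and \eqref{second-momentum}. Continuity in $L^1(\langle x\rangle^2dx)$ then follows: $L^1$ continuity combined with the exact (continuous) formula for $\int|x|^2u(t)$ gives tightness of the weight, for instance by the Brezis--Lieb/Scheffé argument applied to the nonnegative functions $|x|^2u(t)$ converging in measure.

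\emph{Free energy.} For $t>0$, $u(t)\in L^\infty\cap L^1(\langle x\rangle^2)$. Then $u\log^+u\le \|u\|_\infty u$, and $u\log^-u$ is controlled by the standard Carleman bound $u|\log u|\mathbf 1_{u<1}\lesssim u|x|^2+e^{-|x|^2/2}$. For the potential, $|\Phi_u(x)|\lesssim \log(2+|x|)\,M+C\|u\|_{L^1\cap L^\infty}$, since $\log|x-y|\le\log(1+|x|)+\log(1+|y|)$ and the local singularity is integrable. Hence $u\Phi_u\in L^1$ thanks to the second moment.

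The dissipation identity \eqref{freeenergy-2} I would prove on $[t,t']\subset(0,T)$ by differentiating $\int u\log u$ and $\int u\Phi_u$ for the smooth solution, again with cutoffs. Boundary terms are controlled by the $W^{k,1}\cap W^{k,\infty}$ bounds and the second moment. The one delicate point is $\nabla\log u$, because $u$ may decay at infinity. Positivity $u>0$ for $t>0$ holds by the strong maximum principle, assuming $u_0\not\equiv0$. One then writes the integrand as $|\nabla u|^2/u$ and proceeds by monotone convergence, using $\int u\log u$ at the two endpoints being finite. This also gives $u|\nabla\log u-\nabla\Phi_u|^2\in L^1_{loc}((0,T),L^1)$.

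\emph{Main obstacle.} I expect the hardest step to be justifying the integrations by parts in the entropy identity at spatial infinity, namely the terms $\nabla u\log u\,\nabla\chi_R$, given only second-moment decay. I would handle these with Gaussian-type lower and upper bounds derived from comparison on $[t/2,t]$: bounded drift gives $\log u\gtrsim -C(1+|x|^2)$, so $|\log u|\lesssim 1+|x|^2$. This makes $\int |\nabla u||\log u||x|/R^2$ summable in the limit $R\to\infty$.
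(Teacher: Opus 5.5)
Your treatment of the first and third items is correct, and it follows a different route from the paper's. The paper obtains $u\in C([0,T),L^1(\langle x\rangle^2dx))$ by redoing the fixed point in weighted spaces, using localized heat-kernel bounds and a near/far splitting of $\nabla\Phi_u$. It then only asserts the moment identities by a density argument. You instead use the symmetrized kernel to get $|\frac{d}{dt}\int u\chi_R|\lesssim M+M^2$ uniformly in $R$. From this you deduce the exact virial and center-of-mass identities by dominated convergence. You then recover weighted continuity from $L^1$ continuity plus continuity of $t\mapsto\int|x|^2u(t)$, via Scheff\'e. This is more elementary and covers the whole lifespan at once. The costs are that it uses $u\ge0$ and does not give continuous dependence on data in the weighted norm. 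For the limit $R\to\infty$, use Fatou, or choose $\chi$ so that $R^2\chi(x/R)$ is nondecreasing in $R$.

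The gap is in the dissipation identity, exactly at the term you flag. Even granting the Aronson-type bound $|\log u|\lesssim 1+|x|^2$, on the annulus $R<|x|<2R$ you only get $|\log u|\,|\nabla\chi_R|\lesssim R$. So $\int|\nabla u|\,|\log u|\,|\nabla\chi_R|\to0$ requires $R\int_{|x|>R}|\nabla u|\,dx\to0$, which is essentially $\nabla u(t)\in L^1(\langle x\rangle dx)$. Nothing in your plan provides this. Theorem \ref{th:lwp:L1} gives only unweighted $W^{k,1}$ bounds, and the second moment of $u$ does not control moments of $\nabla u$. Even $|\nabla u|^2\lesssim\|\nabla^2u\|_{L^\infty}u$ only yields $\int_{R<|x|<2R}|\nabla u|=o(1)$, not $o(R^{-1})$. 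As written the step does not follow, and you would need an extra weighted parabolic estimate on $\nabla u$.

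The paper avoids both this and any lower bound on $u$ with one more integration by parts. Since $\log u\,\nabla u=\nabla(u\log u-u)$ and $\Phi_u\nabla u=\nabla(u\Phi_u)-u\nabla\Phi_u$, the boundary term $B_R$ involves only $u\log u$, $u\Phi_u$, $u$ and $\nabla\Phi_u$ tested against $\Delta\chi_R$ or $\nabla\chi_R$. It is therefore $o_{R\to\infty}(1)$ uniformly on $[t,t']$. This uses the $L^1$ bounds on $u\log u$ and $u\Phi_u$ you already have (uniform on compact subintervals of $(0,T)$), together with $|\nabla\Phi_u|\lesssim\langle x\rangle^{-1}$. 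Also, you need not rewrite the dissipation as $|\nabla u|^2/u$: monotone convergence applies directly to the nonnegative integrand $u|\nabla\log u-\nabla\Phi_u|^2\chi_R$.
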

\begin{proof}
		
		\noindent \underline{Continuity of the flow map in $L^1(\langle x \rangle^2 dx)$}.This result follows from the fact that the Cauchy problem associated with the Keller--Segel system \eqref{KS} is well-posed in the weighted space $L^1(\langle x \rangle^2 \, dx)$. Since the underlying arguments are essentially analogous to those used in establishing well-posedness in $L^1$, see for instance \cite{BC}, we shall confine ourselves to a sketch of the principal ideas.  
		
		First, for all $1 \leq p \leq q \leq \infty$, one has the localized heat kernel estimate
		\begin{equation} \label{bd:heat-kernel-localized}
			\| \nabla G(t) * f \|_{L^q(\langle x \rangle^2 dx)} 
			\lesssim \frac{1}{t^{\frac{1}{2} + \frac{1}{p} - \frac{1}{q}}} 
			\| f \|_{L^p(\langle x \rangle^2 dx)},
		\end{equation}
		valid for $0 < t \leq 1$. This inequality is a consequence of the pointwise bound
		\begin{align*}
			|\langle x \rangle^{2} (\nabla G \ast f)(x)| 
			&\lesssim t \int_{\mathbb{R}^{2}} \frac{|x-y|}{t} \, \frac{|x-y|^{2}}{t} 
			|G(x-y,t)| \, |f(y)| \, dy \\
			&\quad + \int_{\mathbb{R}^{2}} \frac{|x-y|}{t} \, |G(x-y,t)| \, |f(y)| \, |y|^{2} \, dy .
		\end{align*}
		It is clear that the first contribution on the right-hand side represents only a perturbative term with respect to \eqref{bd:heat-kernel-localized}.  
		
		Second, we decompose
		\[
		\nabla \Phi_f 
		= -\frac{1}{2\pi}\Big(\frac{\cdot}{|\cdot|^2}\chi (\cdot)\Big)*f
		-\frac{1}{2\pi}\Big(\frac{\cdot}{|\cdot|^2}(1-\chi (\cdot))\Big)*f
		= \Theta_f+\Theta'_f,
		\]
		where for $1<q<2$, each component satisfies the estimates
		\begin{equation} \label{bd:Hardy-Littlewood-Sobolev-localized}
			\left\| \Theta_f \right\|_{L^{\frac{2q}{2-q}}(\langle x \rangle^2 dx)}\lesssim \| f\|_{L^q(\langle x \rangle^2 dx)},
			\qquad 
			\|\Theta'_f\|_{L^\infty}\lesssim \| f\|_{L^1}.
		\end{equation}
		The first inequality follows from Hardy--Littlewood--Sobolev inequality, after noting that
		\begin{align*}
			|\Theta_{f}(x)|
			&\lesssim \int_{\R^2}\frac{1}{|x-y|}\chi(|x-y|)|f(y)|\,dy \\
			&\lesssim \int_{\R^{2}}\frac{1}{|x-y|}\chi(|x-y|)\langle y \rangle ^{2/q}|f(y)|\,dy,
		\end{align*}
		while the second estimate follows directly from the trivial bound $\langle y\rangle \geq 1$. The fixed-point formulation for mild solutions is then expressed as
		\begin{equation} \label{id:fixed-point-localized}
			u(t)=G(t)*u_0-\int_0^t \nabla G(t-s)* \big(u(s)\Theta_{u(s)}\big)\,ds
			-\int_0^t \nabla G(t-s)* \big(u(s)\Theta'_{u(s)}\big)\,ds .
		\end{equation}
		If $u\in C([0,T],L^1(\langle x \rangle^2 dx))$ with $t^{1/4}u\in L^{\infty}([0,T],L^{4/3}(\langle x \rangle^2 dx))$, then the second term in \eqref{id:fixed-point-localized} can be controlled by means of \eqref{bd:heat-kernel-localized} and \eqref{bd:Hardy-Littlewood-Sobolev-localized}. Indeed, observing that
		\[
		|u(s)\Theta_{u(s)}|\langle x \rangle^{2}
		=|u(s) \Theta_{u(s)}|\langle x \rangle^{2(\frac{1}{4/3}+\frac{1}{4})},
		\]
		we obtain
		\begin{align*}
			\| \nabla G(t-s)* (u(s)\Theta_{u(s)})\|_{L^1(\langle x \rangle^2)} 
			&\lesssim \frac{1}{(t-s)^{1/2}}\| u(s)\Theta_{u(s)}\|_{L^1(\langle x \rangle^2 dx)}\\
			&\lesssim \frac{1}{(t-s)^{1/2}} \| u(s)\|_{L^{4/3}(\langle x \rangle^2 dx)}\| \Theta_{u(s)}\|_{L^{4}(\langle x \rangle^2 dx)} \\
			&\lesssim \frac{1}{(t-s)^{1/2}s^{1/2}} \| t^{1/4}u\|_{L^{\infty}([0,T],L^{4/3}(\langle x \rangle^2 dx))}^2.
		\end{align*}
		Alternatively,
		\begin{align*}
			\|\nabla G(t-s)\ast (u(s)\Theta_{u(s)})\|_{L^{4/3}}
			&\lesssim \frac{1}{(t-s)^{1/2}}\|u(s)\Theta_{u(s)}\|_{L^{1}}\\
			&\lesssim \frac{1}{(t-s)^{1/2}}\|u(s)\|_{L^{4/3}}\|\Theta_{u(s)}\|_{L^{4}} \\
			&\lesssim \frac{1}{s^{1/2}(t-s)^{1/2}}\|t^{1/4}u(t)\|^{2}_{L^{\infty}([0,T],L^{4/3}(\langle x \rangle^{2} dx))}.
		\end{align*}
		
		Finally, the third term in \eqref{id:fixed-point-localized} can be estimated, for $p=1,4/3$, as
		\begin{align*}
			\Big\| \int_{0}^{t} \nabla G(t-s)* (u(s)\Theta'_{u(s)})\,ds\Big\|_{L^p(\langle x \rangle^2 dx)} 
			&\lesssim \int_{0}^{t} \frac{1}{(t-s)^{1/2}}\| u(s)\Theta'_{u(s)}\|_{L^1(\langle x \rangle^2 dx)}\,ds \\
			&\lesssim \int_{0}^{t}\frac{1}{(t-s)^{1/2}} \| u(s)\|_{L^{1}(\langle x \rangle^2 dx)}\| \Theta'_{u(s)}\|_{L^{\infty}}\,ds\\
			&\lesssim \int_{0}^{t}\frac{1}{(t-s)^{1/2}} \| u(s)\|^{2}_{L^{1}(\langle x \rangle^2 dx)}\,ds \\
			&\lesssim t \, \|u(t)\|^{2}_{L^{\infty}([0,T],L^{1}(\mathbb{R}^{2}))}.
		\end{align*}
		Thanks to Theorem \ref{th:lwp:L1}, one may choose $t$ sufficiently small so that this last term becomes arbitrarily small. These estimates constitute the key ingredients for solving \eqref{id:fixed-point-localized} via a fixed-point argument in the class of functions 
		\[
		u \in C([0,T],L^1(\langle x \rangle^2 dx)), 
		\qquad 
		t^{1/4}u \in L^{\infty}([0,T],L^{4/3}(\langle x \rangle^2 dx)),
		\]
		with the additional condition 
		\[
		\lim_{t\downarrow 0} t^{1/4}\|u\|_{L^{\infty}([0,T],L^{4/3}(\langle x \rangle^2 dx))}=0,
		\]
		exactly as in the proof of Theorem \ref{th:lwp:L1}.
		
		\medskip
		
		\noindent 
	\end{proof}

\begin{lemma} \label{lem:free-energy-well-def}
	If $u\geq 0$ satisfies $u\in L^\infty \cap L^1(\langle x \rangle^2dx)$ then we have $u|\log u|,u|\Phi_u|\in L^1$ with
	$$
	\| u |\log u|\|_{L^1}\lesssim \| u \|_{L^1(\langle x\rangle^2dx)}\langle \log \langle \| u\|_{L^\infty}\rangle\rangle \quad \mbox{and} \quad \| u\Phi_u\|_{L^1}\lesssim \| u\|_{L^1(\langle x\rangle^2dx)}(\| u\|_{L^1(\langle x\rangle^2dx)}+\| u\|_{L^\infty}).
	$$
	In particular, the free energy $\mathcal F[u]$ is well-defined.
\end{lemma}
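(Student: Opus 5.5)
The argument splits into the entropy term $u|\log u|$ and the potential term $u|\Phi_u|$. Throughout, write $m_2:=\| u\|_{L^1(\langle x\rangle^2dx)}$ and $L:=\|u\|_{L^\infty}$.

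\emph{Entropy term.} Split $\mathbb R^2$ into the regions $\{u\geq 1\}$ and $\{u<1\}$.

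On $\{u\ge1\}$ we have $0\le \log u\le \log\langle L\rangle$. Hence
$$\int_{u\ge1}u|\log u|\le \|u\|_{L^1}\log\langle L\rangle\le m_2\,\log\langle L\rangle.$$

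On $\{u<1\}$ we use the classical Carleman-type splitting. Split this region further according to whether $u\ge e^{-|x|^2}$ or $u<e^{-|x|^2}$.

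1. Where $e^{-|x|^2}\le u<1$, we have $|\log u|\le |x|^2$. The contribution is therefore bounded by $\int |x|^2u\le m_2$.
2. Where $u<e^{-|x|^2}$ (and $u<1$), use $u|\log u|\lesssim \sqrt u$. The contribution is then bounded by $\int e^{-|x|^2/2}dx\lesssim 1$.

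Step 2 produces an additive constant not proportional to $m_2$. To obtain the homogeneous bound as stated, I will replace $e^{-|x|^2}$ by a threshold that scales with the mass. Concretely, I will use $u<\min(1,e^{-|x|^2})\cdot$ (a weight built from $m_2$), or simply accept a bound of the form $m_2\langle\log\langle L\rangle\rangle + C$. The latter is what is actually needed for well-definedness of $\mathcal F$. I will also note that the estimate is not scale invariant, so the constant should be read with $\lesssim$ allowing this dependence. Getting the exact stated homogeneity is the only delicate bookkeeping point; for the purpose of the lemma, namely finiteness, the decomposition above suffices.

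\emph{Potential term.} Write
$$2\pi|\Phi_u(x)|\le \int_{|x-y|<1}|\log|x-y||\,u(y)\,dy+\int_{|x-y|\ge1}\log|x-y|\,u(y)\,dy.$$

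The local part is bounded by $\|\log|\cdot|\,\mathbf 1_{B_1}\|_{L^1}\,L\lesssim L$.

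For the far part, use $\log|x-y|\le \log(1+|x|)+\log(1+|y|)\lesssim \langle x\rangle+\langle y\rangle$, or more sharply $\log\langle x-y\rangle\lesssim 1+|x|^2+|y|^2$ if needed. This gives the pointwise bound $|\Phi_u(x)|\lesssim L+\langle x\rangle\,m_2$.

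Multiplying by $u$ and integrating then gives
$$\int u|\Phi_u|\lesssim L\|u\|_{L^1}+m_2\int\langle x\rangle u\lesssim m_2(L+m_2),$$
since $\langle x\rangle\le\langle x\rangle^2$.

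\emph{Conclusion and main obstacle.} Combining both estimates shows that $\mathcal F[u]$ is an absolutely convergent sum of integrals, and hence is well defined. I expect the main obstacle to be the small-$u$ region of the entropy term, which is where the second moment is genuinely needed to rule out $-\int u\log u=+\infty$ from mass spread out at infinity. The Gaussian-threshold splitting described above is the standard way to handle it.
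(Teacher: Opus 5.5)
\paragraph{Potential term.} Your bound on the potential term is correct and is essentially the paper's argument. The paper splits the $y$-integral into $|x-y|\le 1$ and the two far regions $|x|\le|y|+1$, $|y|\le|x|+1$, which gives the slower pointwise growth $|\Phi_u(x)|\lesssim\langle\log\langle x\rangle\rangle(\|u\|_{L^1(\langle x\rangle^2dx)}+\|u\|_{L^\infty})$. Your $\log|x-y|\le\log(1+|x|)+\log(1+|y|)\lesssim\langle x\rangle+\langle y\rangle$ gives linear growth instead. Since only $\int\langle x\rangle^2u$ is spent at the end, both yield $m_2(m_2+L)$. The alternative $\log\langle x-y\rangle\lesssim 1+|x|^2+|y|^2$ that you call ``sharper'' is in fact cruder; keep the first one.

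\paragraph{Entropy term: a different route.} The paper writes $u=\langle x\rangle^{-2}f$ with $\|f\|_{L^1}=m_2$ and uses $|u\log u|\le\langle x\rangle^{-2}f|\log f|+2\langle x\rangle^{-2}f\log\langle x\rangle$. It handles $\{f\ge1\}$ via $\log f\le 2\log\langle x\rangle+\log\langle L\rangle$, and $\{f<1\}$ via $f|\log f|\lesssim\sqrt f$ and Cauchy--Schwarz against $\langle x\rangle^{-2}\in L^2$. You use instead the classical Carleman splitting with the Gaussian threshold $e^{-|x|^2}$. Both arguments are sound. Both show $u\log u\in L^1$, hence that $\mathcal F[u]$ is well defined, with bounds uniform over $u$ with bounded $m_2$ and $L$. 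That is all the lemma is used for later, in the free-energy dissipation identity.

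\paragraph{The homogeneous bound is false.} Neither argument proves the homogeneous entropy bound as stated, and you should not try to rescue it. Take $u=\epsilon\mathbbm{1}_{B_1}$ with $\epsilon\to0$. Then $m_2=\frac{3\pi}{2}\epsilon$ and $\langle\log\langle L\rangle\rangle\to1$, but $\|u\log u\|_{L^1}=\pi\epsilon|\log\epsilon|$.

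Consistently with this, the Cauchy--Schwarz step in the paper actually gives $\|\langle x\rangle^{-2}\sqrt f\|_{L^1}\lesssim\|f\|_{L^1}^{1/2}$, not $\|f\|_{L^1}$. So the paper's proof really yields $m_2\langle\log\langle L\rangle\rangle+Cm_2^{1/2}$, and yours yields $m_2\langle\log\langle L\rangle\rangle+C$. The paper's version has the mild advantage of vanishing with the mass.

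Your idea of a mass-dependent threshold is the right way to get the sharp form, but it produces a logarithmic loss, not exact homogeneity. For $m_2\le e^{-1}$, take the threshold $m_2e^{-|x|^2}$. On $\{u<m_2e^{-|x|^2}\}$, bound $u|\log u|$ using the monotonicity of $s\mapsto s|\log s|$ on $(0,e^{-1})$ rather than by $\sqrt u$. This gives
\[
\|u\log u\|_{L^1}\lesssim m_2\bigl(\langle\log\langle L\rangle\rangle+\log\langle m_2^{-1}\rangle\bigr),
\]
and the example above shows this is optimal. For $m_2\ge e^{-1}$ your additive constant is absorbed, and the stated bound does hold. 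Finally, in this paper $\lesssim$ means an absolute constant. The remedy is therefore to correct the statement, not to let the implicit constant depend on the data.
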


\begin{proof}
	We introduce the change of variables $u = \langle x \rangle^{-2} f$, so that 
	$\|f\|_{L^{1}} = \|u\|_{L^{1}(\langle x \rangle^{2}dx)}$.
	With this notation, we decompose
	\begin{align*}
		|u \log u|
		&\lesssim \langle x \rangle^{-2} f |\log f| \, \mathbbm{1}(f \geq 1) 
		+ \langle x \rangle^{-2} f |\log f| \, \mathbbm{1}(f < 1) 
		+ \langle x \rangle^{-2} \langle \log \langle x \rangle \rangle f :=I+II+III.
	\end{align*}
	
	On the set $\{ f  \geq 1 \}$, we observe since $u$ is bounded that $|\log f| \lesssim \langle \log \langle x \rangle \rangle+\log \langle \| u\|_{L^\infty}\rangle$. Since $\langle x \rangle^{-2}\langle \log \langle x \rangle \rangle$ is bounded, we see $\| I+III\|_{L^1}\lesssim \| f\|_{L^1}\langle \log \langle \| u\|_{L^\infty}\rangle\rangle$. On the set $\{ f < 1 \}$, we have $f |\log f|\lesssim \sqrt{f}$. Since the functions $\sqrt{f}$ and $\langle x\rangle^{-2}$ belong to $L^2$, we deduce $\| II\|_{L^1}\lesssim \| f\|_{L^1}$ by H\"older. This shows $u |\log u| \in L^{1}$ with the desired estimate.
	
	Next, for $x\in \mathbb R^2$, we estimate
	\begin{align*}
		& \int |\log |x-y|| \, |u(y)| \, dy
		= \int_{|x-y|\leq 1} \cdots \, dy
		+ \int_{\substack{|x-y|>1 \\ |x|\le |y|+1}} \cdots \, dy
		+ \int_{\substack{|x-y|>1 \\ |y|\le |x|+1}} \cdots \, dy \\
		& \qquad \lesssim \|u\|_{L^\infty} 
		+ \int_{|x|\leq |y|} \langle \log \langle y \rangle \rangle\langle y \rangle^{-2} f(y) \, dy 
		+ \langle \log \langle x \rangle \rangle \|u\|_{L^1}  \, \lesssim \|u\|_{L^\infty} 
		+ \langle \log \langle x \rangle \rangle \|u\|_{L^1} 
		+ \|f\|_{L^1} .
	\end{align*}
	
	Therefore,
	$
	|\Phi_u(x)| \lesssim \langle \log \langle x \rangle \rangle \,
	\|u\|_{L^1(\langle x \rangle^{2} dx)\cap L^\infty}$,
	implying $u\Phi_u\in L^1$ with the desired estimate.

\end{proof}

In the next lemma, we also establish the fundamental identities 
\eqref{first-momentum}, \eqref{second-momentum}, and \eqref{freeenergy-2}.  

\begin{lemma}
	Let $u_0 \in L^1(\langle x \rangle^2 dx)$ be nonnegative, and let $u$ denote the solution to the Keller--Segel system \eqref{KS} given by Theorem~\ref{th:lwp:L1}. Then the following properties hold:
	\begin{itemize}
		\item \emph{Instantaneous boundedness and decay of the free energy functional.}  
		For every $t \in (0,T(u_0))$ we have 
		\[
		u(t)|\log u(t)|, \quad u(t)|\Phi_{u(t)}| \in L^1(\mathbb{R}^2),
		\]
		so that the free energy $\mathcal F[u](t)$ \eqref{freeenergy} is well-defined.  
		Moreover, 
		\[
		u|\nabla \log u - \nabla \Phi_u|^2 \in L^1_{\mathrm{loc}}\big((0,T),L^1(\mathbb{R}^2)\big),
		\]
		and the identity \eqref{freeenergy-2} holds for all $0<t<t'<T(u_0)$.
		
		\item \emph{First and second moments.}  
		The identities \eqref{first-momentum} and \eqref{second-momentum} are satisfied.
	\end{itemize}
\end{lemma}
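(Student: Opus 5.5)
Fix $0<t_0<T(u_0)$ and work on $[t_0,T')$ with $T'<T(u_0)$. On this interval the solution is smooth with all derivatives in $L^1\cap L^\infty$ by Theorem~\ref{th:lwp:L1}, and it stays in $L^1(\langle x\rangle^2dx)$ by Theorem~\ref{thm:lwp-L1x2}. For the first item, apply Lemma~\ref{lem:free-energy-well-def} at each $t\in(0,T)$: since $u(t)\in L^\infty\cap L^1(\langle x\rangle^2dx)$ and $u(t)\ge 0$ (positivity is preserved by the comparison principle, as in the proof of Corollary~\ref{ImprovedLinf}), we get $u|\log u|,u|\Phi_u|\in L^1$. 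The resulting bounds are locally uniform in time on $[t_0,T']$.

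\emph{Momenta.} I would multiply \eqref{KS} by a cutoff weight $\chi_R(x)\phi(x)$ with $\phi=x$ or $\phi=|x-x^*|^2$, where $\chi_R=\chi(\cdot/R)$, and integrate by parts. This gives
$\frac{d}{dt}\int u\chi_R\phi=\int u\Delta(\chi_R\phi)+\int u\nabla\Phi_u\cdot\nabla(\chi_R\phi)$.
The diffusion term tends to $4M$ for $\phi=|x|^2$ and to $0$ for $\phi=x$, with errors bounded by $\int_{|x|>R}u\langle x\rangle^2$. For the drift term I would symmetrize in $x,y$:
$-\frac{1}{4\pi}\iint u(x)u(y)\frac{(x-y)\cdot(\nabla\psi(x)-\nabla\psi(y))}{|x-y|^2}$ with $\psi=\chi_R\phi$.
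Since $\nabla^2\psi$ is bounded by $C$ uniformly in $R$ for the quadratic weight, the integrand is bounded, and it converges pointwise to $2$ (respectively $0$ for the linear weight). Dominated convergence then gives the value $-\frac{M^2}{2\pi}$ in the quadratic case and $0$ in the linear case. Integrating in time yields \eqref{first-momentum} and \eqref{second-momentum} on $[t_0,t]$. I would then let $t_0\to0$, using continuity in $L^1(\langle x\rangle^2dx)$ from Theorem~\ref{thm:lwp-L1x2}. The linear weight $x$ needs a similar symmetrization; in that case the integrand of the symmetrized form is bounded by $C/R$ on the relevant region.

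\emph{Free energy identity.} On $[t_0,T']$ I would compute $\frac{d}{dt}\mathcal F=\int u_t(\log u+1-\Phi_u)$. This uses the symmetry of the bilinear form $\int u\Phi_v$, which holds once $u\Phi_v$ is integrable as guaranteed by Lemma~\ref{lem:free-energy-well-def}. Writing $u_t=\nabla\cdot(u(\nabla\log u-\nabla\Phi_u))$ and integrating by parts against a cutoff $\chi_R$ then gives $-\int u|\nabla\log u-\nabla\Phi_u|^2$.

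The main obstacle is justifying this step rigorously. The quantity $\log u$ is unbounded where $u\to0$, so the boundary terms $\int u(\log u-\Phi_u)\,(\nabla\log u-\nabla\Phi_u)\cdot\nabla\chi_R$ need decay control. I would first handle this by replacing $\log u$ with $\log(u+\delta)$ and working with $\mathcal F_\delta=\int (u+\delta)\log(u+\delta)-\delta\log\delta$-type regularizations. Alternatively, I would use a Gaussian lower bound $u\gtrsim e^{-C|x|^2}$ for $t\ge t_0$, obtained by comparison with the heat equation with bounded drift, so that $|\log u|\lesssim\langle x\rangle^2$. The gradient bound then comes from $|\nabla\log u|=|\nabla u|/u$ via a Li--Yau/Harnack-type estimate, or more simply by integrating the dissipation in its form $\int |\nabla u|^2/u$ using monotone convergence. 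With the cutoff, the time-integrated identity holds with an error of the form $\int\!\!\int_{R<|x|<2R}\dots$. This error vanishes as $R\to\infty$ by the weighted integrability $\langle x\rangle^2 u$ and $|\log u|\lesssim\langle x\rangle^2$. Fatou's lemma gives the dissipation in $L^1_{loc}$, and the identity \eqref{freeenergy-2} follows by monotone convergence of the nonnegative dissipation.
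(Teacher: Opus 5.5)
Your first item is essentially the paper's argument. Your moment computation is correct and more explicit than the paper, which only invokes an omitted density argument. With $\psi=\chi_R|x-x^*|^2$, the symmetrized kernel is bounded by $\|\nabla^2\psi\|_{L^\infty}\lesssim 1$ uniformly in $R$ and tends pointwise to $2$. This gives $4M-\frac{M^2}{2\pi}=4M(1-\frac{M}{8\pi})$. For $\psi=\chi_R x_i$ one has $\|\nabla^2\psi\|_{L^\infty}\lesssim R^{-1}$, so the drift term vanishes. Letting $t_0\to0$ using continuity in $L^1(\langle x\rangle^2dx)$ then closes the argument.

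The gap is in the free energy identity, and the obstacle you single out is artificial. In the boundary term $\int(1+\log u-\Phi_u)(\nabla u-u\nabla\Phi_u)\cdot\nabla\chi_R$, use $(1+\log u)\nabla u=\nabla(u\log u)$ and $\Phi_u\nabla u=\nabla(u\Phi_u)-u\nabla\Phi_u$, and integrate by parts once more. This is legitimate since $\chi_R$ has compact support and $u>0$ is smooth. What remains are integrals over $\{R<|x|<2R\}$ of $|u\log u|+|u\Phi_u|+u$ against $|\Delta\chi_R|+|\nabla\chi_R|\,\|\nabla\Phi_u\|_{L^\infty}$. These vanish as $R\to\infty$, uniformly on compact time intervals, by Lemma~\ref{lem:free-energy-well-def}. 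That is exactly what the paper's $B_R$ does, with no lower bound on $u$ and no control of $\nabla\log u$.

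The route you actually elaborate does not close as written. The Gaussian lower bound only yields $|\log u|\lesssim\langle x\rangle^2$, which leaves a term of size $R\int_{R<|x|<2R}|\nabla u|$. Weighted integrability of $\langle x\rangle^2u$ does not control this; you would need weighted control of $\nabla u$, which is not available. Your two ways out are as follows.
\begin{itemize}
\item A pointwise bound $|\nabla\log u|\lesssim\langle x\rangle$ would be a Li--Yau/Hamilton-type estimate for this nonlocal drift--reaction equation, which is unproved and nontrivial.
\item Finiteness of $\int_t^{t'}\!\int|\nabla u|^2/u$ is a consequence of the identity you are proving, so invoking it is circular unless you set up an absorption argument.
\end{itemize}
Your $\log(u+\delta)$ option does work, but you only name it. For fixed $\delta$, the corresponding boundary contribution is bounded by $C_\delta R^{-1}\int_{R<|x|<2R}(|\nabla u|+u)$. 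The limit $\delta\to0$ then follows by monotone convergence on $\int|\nabla u|^2/(u+\delta)$ and dominated convergence elsewhere, with $u(C+|\log u|)$ dominating the entropy density.

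Separately, you should differentiate the localized functional $\mathcal F_R$ rather than $\mathcal F$ itself. The symmetry $\int u_t\Phi_u=\int u\Phi_{u_t}$ needs $\int|u_t|\log\langle x\rangle\,dx<\infty$, which Lemma~\ref{lem:free-energy-well-def} does not give. With the cutoff, the interaction term produces the extra boundary terms $\int\Phi_u\Phi_{u_t}\Delta\chi_R$ and $\int\Phi_{u_t}\nabla\Phi_u\cdot\nabla\chi_R$. The paper controls these through $\Phi_{u_t}=-u-\nabla\cdot\Phi_{u\nabla\Phi_u}$ and the decay $|\nabla\Phi_f|\lesssim\langle x\rangle^{-1}(\|f\|_{L^1(\langle x\rangle^2dx)}+\|f\|_{L^\infty})$.
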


\begin{proof}
	Since the identities \eqref{first-momentum} and \eqref{second-momentum} hold true for smooth and rapidly decaying solutions, their extension to solutions with initial data in $L^1(\langle x \rangle^2 dx)$ follows from a standard density argument that we omit.
	
	We now prove the instantaneous boundedness and decrease of the free energy. All the manipulations below are justified by the parabolic regularization property established in Theorem~\ref{th:lwp:L1}, together with the strict positivity $u>0$, which follows from the maximum principle for parabolic equations with nonnegative initial data. By continuity of the flow in $L^1(\langle x \rangle^2 dx)$, together with the $L^\infty$ regularization ensured by Theorem~\ref{th:lwp:L1}, Lemma~\ref{lem:free-energy-well-def} implies that the free energy is well-defined for all $t \in (0,T(u_0))$.
	
	Fix $0<t'<t''<T$ and consider for $R>0$ the localized free energy functional
	\[
	\mathcal F_R[u](t) = \int \chi_R u \log u \;-\; \frac{1}{2} \int \chi_R u \, \Phi_{ u}.
	\]
	Differentiating the interaction term, using $-\Delta \Phi_f=f$, gives
	\begin{align*}
		\frac{d}{dt}\int u \Phi_{u}\chi_{R}
		&= 2 \int \partial_{t}u \, \Phi_{u}\chi_{R} 
		+ \int \Phi_u \, \Phi_{\partial_{t}u}\Delta \chi_{R} 
		+ 2 \int \nabla \Phi_u  \Phi_{\partial_{t}u} \cdot \nabla \chi_{R}.
	\end{align*}
	As a consequence, using the Keller-Segel equation $u_t=\nabla \cdot[u(\nabla \log u - \nabla \Phi_{u})]$,
	\begin{align*}
	\frac{d}{dt}\mathcal{F}_{R}[u](t)
	& = \int (1+\log u - \Phi_{u})\, \partial_{t}u \, \chi_{R}
	-\frac 12 \int u \, \Phi_{\partial_{t}u}\Delta \chi_{R} 
		- \int \nabla \Phi_u  \Phi_{\partial_{t}u} \cdot \nabla \chi_{R}\\
	&= - \int u \, |\nabla \log u - \nabla \Phi_{u}|^{2}\chi_{R}+B_R(t)
	\end{align*}
	where, using $\Phi_{\partial_{t}u}=-u-\nabla \cdot \Phi_{u\nabla \Phi_u}$,
	$$
	B_R(t)= \int  \, (u+\nabla \cdot \Phi_{u\nabla \Phi_u})( \frac 12 \Phi_u\Delta \chi_{R} + \nabla \Phi_u  \cdot \nabla \chi_{R})+\int u (\log u-\Phi_u)(\Delta \chi_R-\nabla \Phi_u\cdot \nabla \chi_R)
	$$
	gathers the boundary terms. To estimate $B_R$ on $[t',t'']$, since $u(t)\in C([t',t''],L^1(\langle x \rangle^2 dx))$ we have that $u$ is uniformly bounded in $L^1(\langle x \rangle^2 dx)$ on that time interval. Moreover, $\nabla^k u$ is bounded on $[t',t'']\times \R^2$ for every $k\in \N$. By Lemma \ref{lem:free-energy-well-def}, $u\Phi_u$ and $u\log u$ are uniformly bounded in $L^1$. Moreover, by bounding
	$$
	|\nabla \Phi_f(x)|\lesssim \int_{\langle x-y\rangle<\langle x \rangle/2}|f(y)| \frac{dy}{|x-y|}+ \int_{\langle x-y\rangle>\langle x \rangle/2}|f(y)| \frac{dy}{|x-y|}\lesssim \langle x \rangle^{-1} (\| f\|_{L^1(\langle x \rangle^2dx)}+\| f\|_{L^\infty}),
	$$
	we see $|\nabla \Phi_u|,|\nabla \cdot \Phi_{u\nabla \Phi_u}|\lesssim \langle x \rangle^{-1}$ uniformly. In addition, $|\nabla \chi_R|\lesssim R^{-k}$ and is supported in $\{R<|x|<2R\}$. Combining all these estimates imply that $B_R(t)=o_{R\to \infty}(1)$ uniformly on $[t',t'']$. We conclude the proof of the dissipation identity \eqref{freeenergy-2} by passing to the limit.
	
\end{proof}

	\section{The Asymptotic Trichotomy of the 2D parabolic-elliptic Keller-Segel System}\label{sec:classification}

In this section, we prove Theorems \ref{th:subcritical-measures}, \ref{th:supercritical} and \ref{thm:classification}. We begin with the subcritical case.
\begin{proof}[Proof of Theorem \ref{th:subcritical-measures}]
Since \(M<8\pi\), every atom of \(u_0\) has mass strictly smaller than \(8\pi\). Hence applying for instance Theorem 1 (i) in \cite{BM} for measure-valued data, and there exists a unique local mild solution with initial datum \(u_0\). Moreover, this solution regularizes instantaneously and for every \(t>0\),
    $u(\tau)\in L^1(\mathbb R^2)\cap L^\infty(\mathbb R^2)$,
     and 
    $\|u(\tau)\|_{L^1(\mathbb R^2)}=M$.
We now fix \(t>0\). The proof follows applying the classification proved in \cite{N} to the solution starting from the datum \(u(t)\in L^1(\mathbb R^2)\) whose mass is still \(M<8\pi\).
\end{proof}
Before proceeding to the proof of Theorem~\ref{th:supercritical}, it is convenient to establish the following lemma, which follows from a review of some results proved in \cite{M}.
\begin{lemma}\label{lem:outer-linfty}
Let \(u\) be a radial finite-time blow-up solution as in \cite{M}, and let
\(\lambda(t)\) be the blow-up scale. Then
\[
\lim_{A\to\infty}\limsup_{t\to T}
\sup_{|x|\ge A\lambda(t)}
\lambda^2(t)u(x,t)=0.
\]
\end{lemma}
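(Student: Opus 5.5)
We plan to work entirely in the partial mass formulation of Remark~\ref{Partial Mass Formulation}, where the problem becomes a local parabolic equation amenable to comparison. There we have
\[
W(\xi,t)=\frac{1}{2\pi \xi^2}\int_{|x|<\xi}u,
\qquad
u(\xi,t)=2W+\xi W_\xi=\frac{1}{\xi}\partial_\xi(\xi^2 W).
\]
Write $Q(\rho)=\frac{1}{2\pi\rho^2}\int_{|y|<\rho}U=\frac{4}{1+\rho^2}$ for the partial mass of the stationary state. The result of \cite{M} yields, for any radial blow-up solution, the inner convergence
\[
\lambda^2(t)\,W(\lambda(t)\rho,t)\to Q(\rho)
\]
uniformly on compact sets in $\rho$. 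It also yields comparison with sub- and supersolutions built from the blow-up example of \cite{HV0,RS,CGMN0}; this comparison controls $W$ in the intermediate region $A\lambda\le \xi\le \sqrt{T-t}$ and in the parabolic region $\xi\gtrsim\sqrt{T-t}$.

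The first step is to extract from \cite{M} an outer upper bound of the form
\[
\lambda^2(t)\,W(\xi,t)\le C\Bigl(\frac{\lambda(t)}{\xi}\Bigr)^2\quad\text{for } \xi\ge A\lambda(t),
\]
possibly up to errors that vanish as $A\to\infty$ and $t\to T$. In other words, the mass in $\{|x|<\xi\}$ stays bounded: it equals $8\pi+o(1)$ for $A\lambda\le\xi\ll\sqrt{T-t}$. This bound follows from mass conservation together with the barrier functions of \cite{M}. In backward self-similar variables, the barriers show that $w(r,\tau)$ is close to $\frac{4}{\nu^2+r^2}$ in the self-similar region, where $\nu=\lambda/\sqrt{T-t}$, and is bounded uniformly for $r\ge1$. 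Consequently $\lambda^2W\lesssim \lambda^2/\xi^2\to0$ uniformly on $\xi\ge A\lambda$ as $A\to\infty$.

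Pointwise control on $W$ does not control $u=2W+\xi W_\xi$, since a bound on $W$ says nothing about its derivative. The second step is therefore to upgrade the bound to $u$ by parabolic regularity at the natural scale. Fix $(\xi_0,t_0)$ with $\xi_0\ge A\lambda(t_0)$, and rescale by $\ell=\xi_0/2$:
\[
\widehat W(\eta,s)=\ell^2\,W\bigl(\ell\eta,\;t_0+\ell^2 s\bigr),\qquad s\in[-1,0],\quad \eta\in[1,3].
\]
The function $\widehat W$ solves the same equation. The blow-up rate satisfies $\lambda(t)\ll\sqrt{T-t}$ and varies slowly, so on this backward cylinder we still have $\xi\gtrsim A\lambda(t)$ provided $\ell^2\lesssim T-t_0$. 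If instead $\ell^2\gtrsim T-t_0$, we use the shorter time interval $[-(T-t_0)/\ell^2,0]$ together with the bound on $w$ in the parabolic region. In both cases the first step gives $|\widehat W|\le \epsilon(A)$ on the cylinder.

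Away from $\eta=0$ the equation is uniformly parabolic with smooth coefficients, and the nonlinearity $\widehat W(\eta\widehat W_\eta+2\widehat W)$ is small. Interior Schauder or $L^p$ estimates then give $|\eta\widehat W_\eta|\lesssim\epsilon(A)$ at $(2,0)$. Rescaling back yields
\[
\xi_0^2\,u(\xi_0,t_0)\lesssim\epsilon(A),
\qquad\text{hence}\qquad
\lambda^2u\le A^{-2}\epsilon(A)\to0 .
\]
This argument also covers the region $\xi\ge\sqrt{T-t}$ and the far field $\xi\ge c$, where $u$ is bounded uniformly near $T$ by the result of \cite{M} that blow-up occurs only at the origin; there $\lambda^2u\to0$ trivially. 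An alternative is to apply the comparison principle directly to the equation satisfied by the radial $u$, using a supersolution of the form $\frac{C\lambda^2}{\xi^4}$-type outer tails.

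The expected main obstacle is the first step. It requires the outer bound uniformly across the intermediate zone $A\lambda\le\xi\le\sqrt{T-t}$, and this must be read off carefully from the barrier construction in \cite{M}, including the slow variation of $\lambda$ needed for the backward parabolic cylinders. If the barriers there are only stated in self-similar variables, we will translate them directly. Because $W$ is monotonic in the right direction, we can then use it to bound $u$ on average, and conclude with the local smoothing argument above.
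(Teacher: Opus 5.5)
Your route differs from the paper's and can be made to work, but Step 2 as written rests on a false claim.

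\textbf{The error in Step 2.} You assert that $|\widehat W|\le\epsilon(A)$ on the cylinder and that the nonlinearity is small. In fact $\widehat W(\eta,s)=\ell^2W(\ell\eta,\cdot)=M(\ell\eta,\cdot)/\eta^2$. In the intermediate zone the enclosed partial mass is close to $4$, since it already contains essentially the full $8\pi$ of the bubble. So $\widehat W\approx 4/\eta^2$ is of order one. You have conflated $\lambda^2W$, which is small, with $\ell^2W$, which is not since $\ell\ge A\lambda/2$. Hence $|\eta\widehat W_\eta|\lesssim\epsilon(A)$ and $\xi_0^2u\lesssim\epsilon(A)$ do not follow from your reasoning.

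\textbf{How to repair it.} Smallness is not needed, and your Step 1, which you flag as the main obstacle, is trivial. Positivity and mass conservation give $\xi^2W(\xi,t)=\frac{1}{2\pi}\int_{|x|<\xi}u\le\frac{M}{2\pi}$ for all $\xi$ and $t$. No barriers, mass quantization or slow variation of $\lambda$ are required, and the bound holds on the whole cylinder wherever $A\lambda(t)$ sits. Hence $0\le\widehat W\le M/(2\pi)$ on $[1,3]\times[-1,0]$.

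Now view the equation as linear: $\widehat W_s=\widehat W_{\eta\eta}+b\,\widehat W_\eta+c\,\widehat W$ with bounded coefficients $b=3/\eta+\eta\widehat W$ and $c=2\widehat W$. Interior $W^{2,1}_p$ estimates with $p>3$ then give $|\widehat W_\eta(2,0)|\le C(M)$. Since $\ell^2u(\xi_0,t_0)=2\widehat W(2,0)+2\widehat W_\eta(2,0)$, this yields $\xi_0^2u(\xi_0,t_0)\le C(M)$. The factor $A^{-2}$ alone then gives $\lambda^2u\le C(M)A^{-2}$, because $\xi_0\ge A\lambda$.

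The only constraint is that the backward cylinder $[t_0-\xi_0^2/4,t_0]$ fit inside $(0,t_0]$. For $\xi_0\gtrsim\sqrt{t_0}$, a cylinder of time length of order $t_0$ gives $u\lesssim 1/t_0$, and $\lambda(t)\to0$ concludes.

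\textbf{The paper's route.} The paper avoids derivative control altogether. It uses Lemma 2.7 of \cite{M}: $w(\cdot,\tau)$ is eventually nonincreasing on $[0,R_0]$. Then $\xi W_\xi\le 0$, so $u=2W+\xi W_\xi\le 2W$ \emph{pointwise}; the monotonicity you mention at the end gives more than an averaged bound. Monotonicity again gives $W(\xi,t)\le W(A\lambda,t)\le M/(2\pi A^2\lambda^2)$, hence $\lambda^2u\le M/(\pi A^2)$ on $A\lambda\le|x|\le R_0\sqrt{T-t}$. For $|x|\ge 2\sqrt{T-t}$ it uses $(T-t)u\le C$ from Proposition 3.2 of \cite{M} together with $\lambda^2/(T-t)\to0$.

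\textbf{What each buys.} The paper's proof is a few lines but depends on two nontrivial inputs from \cite{M}. Your corrected argument uses only mass conservation, the scale invariance of the $W$-equation and standard interior parabolic estimates. It in fact proves the blow-up-independent bound $|x|^2u(x,t)\le C(M)$ for $|x|\lesssim\sqrt t$, from which the lemma follows using only $\lambda(t)\to0$.
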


\begin{proof}
We use the partial mass formulation described in Remark \ref{Partial Mass Formulation}. 
By Lemma 2.7 in \cite{M}, for every fixed \(R_0>0\), there exists
\(\tau_0\) such that \(w(\cdot,\tau)\) is nonincreasing on \([0,R_0]\) for
\(\tau\ge \tau_0\).

Fix \(R_0>0\). We first estimate the region
\[
A\lambda(t)\le |x|\le R_0\sqrt{T-t}.
\]
Let \(\xi=|x|\) and \(r=\xi/\sqrt{T-t}\). Since \(w_r\le0\) on
\([0,R_0]\), identity (3.7) in \cite{M} gives
\[
e^{-\tau} u(\xi,t)
=
r w_r(r,\tau)+2w(r,\tau)
\le
2w(r,\tau).
\]
Therefore
\[
u(\xi,t)
\le
\frac{2}{T-t}w(r,\tau).
\]
Using again that \(w(\cdot,\tau)\) is nonincreasing, for
\(\xi\ge A\lambda(t)\) we get
\[
w(r,\tau)
\le
w\!\left(\frac{A\lambda(t)}{\sqrt{T-t}},\tau\right).
\]
Hence
\[
\lambda^2(t)u(\xi,t)
\le
\frac{2\lambda^2(t)}{T-t}
w\!\left(\frac{A\lambda(t)}{\sqrt{T-t}},\tau\right).
\]
By the definition of \(w\) and \(W\) in Remark \ref{Partial Mass Formulation},
\[
w\!\left(\frac{A\lambda(t)}{\sqrt{T-t}},\tau\right)
=
(T-t)
\frac{M(A\lambda(t),t)}{A^2\lambda^2(t)}.
\]
Consequently,
\[
\lambda^2(t)u(\xi,t)
\le
\frac{2}{A^2}M(A\lambda(t),t).
\]
Since the total mass is conserved,
\[
M(A\lambda(t),t)
\le
\frac1{2\pi}\int_{\mathbb R^2}u(x,t)\,dx
=
\frac{M}{2\pi}.
\]
Thus
\begin{equation} \label{bd1}
\lim_{A\to \infty} \limsup_{t\to T}\sup_{A\lambda(t)\le |x|\le R_0\sqrt{T-t}}
\lambda^2(t)u(x,t)
\le \limsup_{A\to \infty}
\frac{M}{\pi A^2} =0.
\end{equation}
It remains to estimate the exterior region
\[
|x|\ge R_0\sqrt{T-t}.
\]
Taking \(R_0=2\), the argument in the proof of Proposition 3.2 in \cite{M},
based on Lemma 2.6 and identity (3.7), gives
\[
(T-t)u(x,t)\le C
\qquad
\text{for } |x|\ge 2\sqrt{T-t},
\]
for \(t\) sufficiently close to \(T\). Hence, as $\lambda^2/(T-t)\to 0$ as $t\to T$,
\begin{equation} \label{bd2}
\limsup_{t\to T}
\sup_{|x|\ge 2\sqrt{T-t}}
\lambda^2(t)u(x,t)=0.
\end{equation}
Combining the estimates \eqref{bd1} and \eqref{bd2} in the two regions shows the desired result. 
\end{proof}
\begin{proof}[Proof of Theorem \ref{th:supercritical}] Assume \(u_0\) is radial. For every
	\(t_0\in(0,T)\), Theorem \ref{th:lwp:L1} implies that
	$u(t_0)\in L^1(\mathbb R^2)\cap L^\infty(\mathbb R^2)$,
	and the solution is classical on \((t_0,T)\). Therefore the classification in \cite{M} applies to the solution restarted at time \(t_0\),
	yielding the desired decomposition
	together with the asymptotic law for \(\lambda(t)\). It remains to justify the two convergence
estimates \eqref{bd:reminder-supercritical-result} for the remainder.

Recall that, by Theorem 1.1 and Proposition 4.3 in \cite{M}, with the scale
\(\lambda(t)\) given by \eqref{id:scale-supercritical-result}, one has
\begin{align}\label{Miz:locconvergence}
\lambda^2(t) u(\lambda(t)y,t)\to U(y)
\end{align}
locally uniformly in \(y\in\mathbb R^2\). Moreover, by Lemma 2.1 (ii) in
\cite{M},
\begin{align}\label{Miz:measconvergence}
u(\cdot,t)\rightharpoonup 8\pi\delta_0+F
\qquad\text{as }t\to T,
\end{align}
with \(F\in L^1(\mathbb R^2)\), while Proposition 2.2 in \cite{M} gives the
mass quantization at the blow-up scale,
\begin{align}\label{Miz:quantizationmass}
\lim_{t\to T}
\int_{|x|<\ell\sqrt{T-t}}u(x,t)\,dx
=
8\pi
\qquad
\text{for every }\ell>0.
\end{align}
\smallskip

\noindent \underline{Proof of the first estimate in \eqref{bd:reminder-supercritical-result}.} We first prove
\begin{align}\label{thm:superL1conv}
\lim_{R\to0}\sup_{0\le t<T}
\|\tilde u(t)\|_{L^1(B_R)}=0.
\end{align}
We begin with times close to \(T\). Fix \(A>1\). For \(t\) sufficiently close to
\(T\), decompose
\[
B_R
=
B_{A\lambda(t)}
\cup
\bigl(B_R\setminus B_{A\lambda(t)}\bigr).
\]
Using \eqref{Miz:locconvergence}, we get
\[
\int_{B_{A\lambda(t)}}
\left|
u(x,t)
-
\frac1{\lambda^2(t)}
U\!\left(\frac{x}{\lambda(t)}\right)
\right|dx
=
\int_{B_A}
\left|
\lambda^2(t)u(\lambda(t)y,t)-U(y)
\right|dy
\to0
\]
as \(t\to T\), for every fixed \(A\). On the complementary region
\[
\int_{B_R\setminus B_{A\lambda(t)}}|\tilde u(x,t)|dx
\le
\int_{B_R\setminus B_{A\lambda(t)}}u(x,t)dx
+
\int_{B_R\setminus B_{A\lambda(t)}}
\frac1{\lambda^2(t)}
U\!\left(\frac{x}{\lambda(t)}\right)dx.
\]
The second term satisfies
\[
\int_{B_R\setminus B_{A\lambda(t)}}
\frac1{\lambda^2(t)}
U\!\left(\frac{x}{\lambda(t)}\right)dx
=
\int_{B_{R/\lambda(t)}\setminus B_A}U(y)dy
\le
\int_{\mathbb R^2\setminus B_A}U(y)dy.
\]
For the first term, choose \(\ell>0\). Then
\[
\int_{B_R\setminus B_{A\lambda(t)}}u
\le
\int_{B_{\ell\sqrt{T-t}}\setminus B_{A\lambda(t)}}u
+
\int_{B_R\setminus B_{\ell\sqrt{T-t}}}u.
\]
By \eqref{Miz:quantizationmass} and \eqref{Miz:locconvergence}, we have
\[
\limsup_{t\to T}
\int_{B_{\ell\sqrt{T-t}}\setminus B_{A\lambda(t)}}u
\le
8\pi-\int_{B_A}U(y)dy=\int_{\R^{2}\setminus B_{R}}U(y)dy.
\]
Moreover, by \eqref{Miz:measconvergence} and \eqref{Miz:quantizationmass},
\[
\limsup_{t\to T}
\int_{B_R\setminus B_{\ell\sqrt{T-t}}}u
\le
\int_{B_R}F(x)dx.
\]
Combining the previous estimates yields
\[
\limsup_{t\to T}\|\tilde u(t)\|_{L^1(B_R)}
\le
2\int_{\mathbb R^2\setminus B_A}U(y)dy
+
\int_{B_R}F(x)dx.
\]
Letting first \(A\to\infty\) and then \(R\to0\), and using
\(U,F\in L^1(\mathbb R^2)\), we obtain
\begin{equation} \label{bd:supercritical-mass1}
\lim_{R\to0}\limsup_{t\to T}
\|\tilde u(t)\|_{L^1(B_R)}=0.
\end{equation}
It remains to make the estimate uniform on the whole interval \([0,T)\). Fix
\(\delta>0\). On the compact interval \([0,T-\delta]\), the solution is smooth and
locally integrable uniformly in time. Therefore
\begin{equation} \label{bd:supercritical-mass2}
\lim_{R\to0}
\sup_{0\le t\le T-\delta}
\|\tilde u(t)\|_{L^1(B_R)}
=0.
\end{equation}
Combining \eqref{bd:supercritical-mass1} and \eqref{bd:supercritical-mass2} proves the desired estimate \eqref{thm:superL1conv}.

\smallskip

\noindent \underline{Proof of the second estimate in \eqref{bd:reminder-supercritical-result}.} We now prove the \(L^\infty\)-estimate
\begin{align}\label{thm:superLinfconv}
\lambda^2(t)\|\tilde u(t)\|_{L^\infty(\mathbb R^2)}\to0
\qquad\text{as }t\to T.
\end{align}
By definition,
\[
\lambda^2(t)\tilde u(x,t)
=
\lambda^2(t)u(x,t)
-
U\!\left(\frac{x}{\lambda(t)}\right).
\]
Let \(A>1\). We split
$\mathbb R^2
=
B_{A\lambda(t)}
\cup
\bigl(\mathbb R^2\setminus B_{A\lambda(t)}\bigr)$, and bound accordingly
\begin{equation} \label{id:supercritical-decomposition}
\begin{aligned}
\limsup_{t\to T}
\lambda^2(t)\|\tilde u(t)\|_{L^\infty}
&\le
\limsup_{t\to T}
\sup_{|x|\le A\lambda(t)}
\left|
\lambda^2(t)u(x,t)
-
U\!\left(\frac{x}{\lambda(t)}\right)
\right|
\\
&\quad+
\limsup_{t\to T}
\sup_{|x|\ge A\lambda(t)}
\lambda^2(t)u(x,t)
+
\sup_{|y|\ge A}U(y).
\end{aligned}
\end{equation}
In the inner region, \eqref{Miz:locconvergence} shows that for any $A>1$ the first term in \eqref{id:supercritical-decomposition} is zero,
\begin{align}\label{thm:superInnConv}
\sup_{|x|\le A\lambda(t)}
\left|
\lambda^2(t)u(x,t)
-
U\!\left(\frac{x}{\lambda(t)}\right)
\right|
=
\sup_{|y|\le A}
\left|
\lambda^2(t)u(\lambda(t)y,t)-U(y)
\right|
\to0
\end{align}
as $t\to T$. Lemma~\ref{lem:outer-linfty} shows that the second term in \eqref{id:supercritical-decomposition} converges to zero as $A\to \infty$, while the third term converges to zero too as $A\to \infty$ by the decay of $U$. Combining, we obtain the desired bound \eqref{thm:superLinfconv}.

\end{proof}
We are now left with the proof of our main result, namely Theorem~\ref{thm:classification}.
\begin{proof}[Proof of Theorem \ref{thm:classification}]

	The characterization of the maximal time of existence follows from {\cite{W}}: solutions are global if and only if \(M\le 8\pi\),
	while finite-time blow-up occurs if \(M>8\pi\). We now consider each mass regime separately.

    \smallskip
    
	\noindent \emph{Subcritical Regime.} In the subcritical regime \(M<8\pi\), the asymptotic convergence toward the unique self-similar profile follows from Theorem \ref{th:subcritical-measures}. Under the additional assumption of finite second moment, convergence rates can be made quantitative by combining the spectral stability result of \cite{EM} with interpolation arguments. More precisely, introducing the self-similar variables
\[
g(s,y)=(1+t)\,u(t,x), \qquad
y=\frac{x}{\sqrt{1+t}}, \qquad
s=\frac12\log(1+t),
\]
and denoting by \(G=\Psi_M\) the stationary self-similar profile, the arguments in \cite{EM} gives
\[
\|g(s)-G\|_{L^{4/3}}
\le C_{\eta} e^{-\eta s}
\]
for some \(\eta>0\). We split
\[
\|g(s)-G\|_{L^1}
\le
\int_{|y|\le R}|g-G|\,dy
+
\int_{|y|>R}|g-G|\,dy.
\]
The first term is estimated by Hölder's inequality,
\[
\int_{|y|\le R}|g-G|\,dy
\le
C R^{1/2}\|g-G\|_{L^{4/3}},
\]
while the second one is controlled through the uniform second moment bound,
\[
\int_{|y|>R}|g-G|\,dy
\le
\frac{C}{R^2}.
\]
Optimizing in \(R\) yields
\[
\|g(s)-G\|_{L^1}
\le
C e^{-\alpha s}
\]
for some \(\alpha>0\). Furthermore, Theorem~1.4 of \cite{EM} provides the uniform bound 
$\sup_{s\ge s_0}\|g(s)\|_{W^{2,\infty}}<\infty$,
and therefore
\[
\sup_{s\ge s_0}\|\nabla(g(s)-G)\|_{L^\infty}<\infty.
\]
Applying the Gagliardo--Nirenberg interpolation inequality
$\|h\|_{L^\infty}
\le
C
\|h\|_{L^1}^{1/3}
\|\nabla h\|_{L^\infty}^{2/3}$,
with \(h=g-G\), we obtain
\[
\|g(s)-G\|_{L^\infty}
\le
C e^{-\alpha s},
\]
possibly after decreasing \(\alpha\). Finally, returning to the original variables and observing that
\[
\tilde u(t,x)
=
\frac{1}{1+t}\bigl(g(s,y)-G(y)\bigr),
\]
we conclude that
\[
\|\tilde u(t)\|_{L^1}
+
(1+t)\|\tilde u(t)\|_{L^\infty}
\le
\frac{C}{(1+t)^\alpha},
\]
for some \(\alpha>0\).

\smallskip

	\noindent \emph{Critical Regime.} In the critical regime \(M=8\pi\), the conclusion follows directly from Theorem \ref{th:critical} established in \cite{BC}.

\smallskip

   \noindent \emph{Supercritical Regime.} In the supercritical regime \(M>8\pi\), the conclusion follows directly from Theorem \ref{th:supercritical}.

\end{proof}

\end{document}